\numberwithin{equation}{section}
\title[Functions on the sphere with critical points in pairs]%
{Functions on the sphere with critical points in pairs and orthogonal geodesic chords}
\author[R. Giamb\`o]{Roberto Giamb\`o}
\author[F. Giannoni]{Fabio Giannoni}
\address{Dipartimento di Matematica e Informatica,\hfill\break\indent
Universit\`a di Camerino, Italy}
\email{roberto.giambo@unicam.it, fabio.giannoni@unicam.it}
\author[P. Piccione]{Paolo Piccione}
\address{Departamento de Matem\'atica \hfill\break\indent Universidade de S\~ao Paulo \hfill\break\indent
Rua do Mat\~ao, 1010 \hfill\break\indent 05508-090 S\~ao Paulo, SP, Brazil}
\email{piccione.p@gmail.com}
\urladdr{http://www.ime.usp.br/\~{}piccione}
\date{March 6th, 2015}
\subjclass[2010]{58E05, 58E10, 53C22, }%{37C29, 37J45, 58E10}
\begin{document}

% Theorems and such

\theoremstyle{plain}\newtheorem{teo}{Theorem}[section]
\theoremstyle{plain}\newtheorem{prop}[teo]{Proposition}
\theoremstyle{plain}\newtheorem{lem}[teo]{Lemma}
\theoremstyle{plain}\newtheorem{cor}[teo]{Corollary}
\theoremstyle{definition}\newtheorem{defin}[teo]{Definition}
\theoremstyle{remark}\newtheorem{rem}{Remark}

\theoremstyle{remark}\newtheorem{example}{Example}

% Enunciati nell'Introduzione, senza numerazione

\theoremstyle{plain}
\newtheorem*{teoA}{Theorem A}
\newtheorem*{teoB}{Theorem B}
\newtheorem*{teoC}{Theorem C}
\newtheorem*{corD}{Corollary D}

\theoremstyle{definition}
\newtheorem*{defin_intro}{Definition}

%%%%%

\begin{abstract}
Using an estimate on the number of critical points for a \emph{Morse-even} function on the sphere $\mathbb S^m$, $m\ge1$, we prove a multiplicity result for orthogonal geodesic chords in Riemannian manifolds with boundary that are diffeomorphic to Euclidean balls. This yields also
 a multiplicity result for brake orbits in a potential well.

\end{abstract}

\maketitle
\begin{section}{Introduction}
The topology of spheres does not allow good estimates on the number of critical points of smooth functions. In fact, given the fact that any
$C^1$-function on a compact manifold admits at least two critical points, Reeb's theorem characterizes spheres as the only compact manifold that admits functions with \emph{exactly} $2$ critical points (see for instance \cite[Theorem~4.1, p.\ 25]{Milnor_Morse}). However, functions having some type of symmetry tend to have more than two critical points. For instance, if $f:\mathbb S^m\to\mathds R$ is even, i.e., $f(x)=f(-x)$ for all $x$, then $f$ defines a function $\widetilde f$ on the projective space $\mathds RP^m$, that must have at least $m+1$ distinct critical points, by Lusternik--Schnirelman (or Morse) theory. Thus, the original function $f$ must have at least $m+1$ distinct pairs of antipodal critical points.
\subsection{Morse-even functions}
Motivated by an example in classical Riemannian geometry, that will be described below, in this paper we prove that the same estimate on the number of critical points holds in a slightly more general situation, when resorting to the Lusternik--Schnirelman category of the projective space is not possible. More precisely, we will consider functions whose critical points come in pairs with the same Morse index.
\begin{defin_intro}
Let $\mathcal M^m$ be a compact $m$-dimensional manifold. A Morse function $f:\mathcal M\to\mathds R$
is said to be \emph{Morse-even} if for every $k=0,\ldots,m$, the set of critical points of $f$ having Morse
index equal to $k$ is an even number.
\end{defin_intro}

The starting observation for Morse-even functions on spheres is the following:
\begin{teoA}
Let $\mathcal M$ be a smooth manifold that is homeomorphic to an $m$-sphere, and let $f:\mathcal M\to\mathds R$ be a Morse-even function. Then, $f$ has critical points of arbitrary Morse index in $\{0,1,\ldots,m\}$. In particular, $f$ admits at least $(m+1)$ distinct pairs of critical points.
\end{teoA}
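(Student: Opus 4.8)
The plan is to run Morse theory on $\mathcal M$ and to play the parity hypothesis against the integrality built into the Morse inequalities. Since $\mathcal M$ is smooth, compact and homeomorphic to $\mathbb S^m$, it carries Morse functions and has the same homology as the sphere; in particular its Poincar\'e polynomial (over any field, e.g.\ $\mathbb{F}_2$) is $P(t)=1+t^m$. Writing $c_k$ for the number of critical points of $f$ of index $k$ and $\mathcal M_f(t)=\sum_{k=0}^m c_k t^k$ for the Morse counting polynomial, the polynomial form of the Morse inequalities furnishes a polynomial $Q(t)=\sum_{j=0}^{m-1} q_j t^j$ with nonnegative integer coefficients such that
\begin{equation}
\mathcal M_f(t)=1+t^m+(1+t)\,Q(t).
\end{equation}
My goal is then to show $c_k\ge 2$ for every $k\in\{0,\dots,m\}$, which immediately yields both assertions.

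Next I would exploit that $f$ is Morse-even, i.e.\ every $c_k$ is even, by reducing the displayed identity modulo $2$. Then $\mathcal M_f(t)\equiv 0$ in $\mathbb{F}_2[t]$, so $(1+t)\,Q(t)\equiv 1+t^m\pmod 2$. Over $\mathbb{F}_2$ one has the factorization $1+t^m=(1+t)(1+t+\cdots+t^{m-1})$, and since $\mathbb{F}_2[t]$ is an integral domain I may cancel the nonzero factor $1+t$ to conclude $Q(t)\equiv 1+t+\cdots+t^{m-1}\pmod 2$. Hence each coefficient $q_j$ with $0\le j\le m-1$ is odd, and in particular $q_j\ge 1$. (Equivalently, one can propagate parities inductively up the degrees: $c_0=1+q_0$ even forces $q_0$ odd, and then $c_k=q_{k-1}+q_k$ even forces $q_k$ odd once $q_{k-1}$ is known to be odd.)

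Finally I would read off the coefficients of the displayed identity: $c_0=1+q_0$, $c_k=q_{k-1}+q_k$ for $1\le k\le m-1$, and $c_m=1+q_{m-1}$. Since every $q_j\ge 1$, each of these is at least $2$, so $f$ has an even and positive number of critical points of every index $0,1,\dots,m$, and $\sum_k c_k\ge 2(m+1)$, i.e.\ at least $m+1$ distinct pairs of critical points. The point worth emphasizing is that the evenness hypothesis by itself is equivalent only to the Euler-characteristic constraint $\sum_k(-1)^k c_k=1+(-1)^m$, which is far too weak; the real content is to combine the parity of \emph{all} the $c_k$ with the nonnegativity and integrality of the quotient $Q$ in the strong Morse inequalities. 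I expect the only delicate points to be essentially bookkeeping: the boundary terms at $k=0$ and $k=m$ (where $P(t)$ contributes the extra $1$), and the remark that being merely homeomorphic, rather than diffeomorphic, to a sphere still suffices both to fix the homology and to guarantee the existence of Morse functions.
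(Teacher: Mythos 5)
Your proof is correct and takes essentially the same route as the paper: the polynomial Morse relations $M_f(\lambda)=1+\lambda^m+(1+\lambda)Q(\lambda)$ with $Q$ having nonnegative integer coefficients, combined with the evenness of all the $\kappa_k$, force every coefficient of $Q$ to be odd and hence every $\kappa_k\ge 2$. Your mod-$2$ cancellation of the factor $(1+t)$ in $\mathbb{F}_2[t]$ is simply a clean packaging of the ``elementary argument'' the paper leaves implicit, and your parenthetical inductive parity propagation ($\kappa_0=1+a_0$ forces $a_0$ odd, then $\kappa_k=a_{k-1}+a_k$ propagates oddness upward) is exactly that argument.
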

\begin{proof}
The Poincar\'e polynomial (with integer coefficients) of the $m$-sphere is given by $P_m(\lambda)=1+\lambda^m$. By Morse theory, there exists
a polynomial $Q(\lambda)=a_0+a_1\lambda+\ldots+a_{m-1}\lambda^{m-1}$  with nonnegative integer coefficients such that:
\begin{equation}\label{eq:MorsePol}
1+\lambda^m+(1+\lambda)Q(\lambda)=(1+a_0)+(a_0+a_1)\lambda+\ldots+(a_{m-2}+a_{m-1})\lambda^{m-1}+(1+a_{m-1})\lambda^m
\end{equation}
is the Morse polynomial $M_f(\lambda)$ of $f$. Recall that $M_f(\lambda)=\sum_{k=0}^m\kappa_k\lambda^k$, where
$\kappa_k$ is the number of critical points of $f$ whose Morse index is equal to $k$. Since $f$ is Morse-even, then each $\kappa_k$ is an even number.
Using \eqref{eq:MorsePol}, one shows by an elementary argument that all the $a_i$ are non-zero and odd, $i=0,\ldots,m-1$.
Thus, all the $\kappa_k$ are positive, $k=0,\ldots,m$.
\end{proof}
\begin{rem}\label{rem:intro}
The above result has a natural generalization to Morse-even functions defined on a connected, orientable compact manifold $\mathcal M$ whose Betti numbers $\beta_k(\mathcal M)$ are even for $k=1,\ldots,m-1$.
If $f:\mathcal M\to\mathds R$ is a Morse-even function on such a manifold, then for all $k=1,\ldots,m-1$, $f$ admits a 
number strictly larger than $\frac12\beta_k(M)$ of pairs of critical points having Morse index equal to $k$
(this statement holds trivially also for $k=0$ and $k=m$).
A proof of this is obtained readily from the Morse relations, given by the equality:
\[\sum_{k=0}^m\beta_kx^k+(1+x)\sum_{k=1}^{m-1}a_kx^k=\sum_{k=0}^m\kappa_kx^k,\]
i.e.:
\[\kappa_0=1+a_0,\quad \kappa_k=\beta_k+a_k+a_{k-1},\ \text{for $k=1,\ldots,m-1$},\quad\kappa_m=1+a_m,\]
for some integer coefficients $a_k\ge0$.
The parity of the coefficients implies $a_0,a_m\ge1$, and $a_k+a_{k-1}\ge2$ for $k=1,\ldots,m-1$.

The condition on the parity of the Betti numbers is satisfied by a large class of manifolds.
For instance, when $m=2$, the condition $\beta_1$ even is satisfied by every compact oriented surface $\Sigma$.
Namely, in this case $\beta_1(\Sigma)=2\,\mathrm{gen}(\Sigma)$ (here $\mathrm{gen}(\Sigma)$ denotes the genus of $\Sigma$).
It is not hard to classify the homeomorphism classes of simply connected manifolds satisfying the condition in low dimensions.
When $m=3$, if $\mathcal M$ is simply connected then $\beta_1(\mathcal M)=0$ (and thus also $\beta_2(\mathcal M)=0$). By the Poincar\'e conjecture, the unique simply connected compact manifold that satisfies the assumptions is the $3$-sphere. Many interesting cases of even Betti numbers are found in dimension $4$. In this case, for a simply connected manifold it suffices to require that $\beta_2$ is even. Recall that in dimension $4$, the second Betti number $\beta_2$ is additive by connected sums, i.e., given $4$-manifolds $\mathcal M_1$ and $\mathcal M_2$,
then $\beta_2(\mathcal M_1\#\mathcal M_2)=\beta_2(\mathcal M_1)+\beta_2(\mathcal M_2)$.
Recall also that $\beta_2(\mathds CP^2)=1$ and $\beta_2(\mathbb S^2\times\mathbb S^2)=2$.
Thus, connected sums of any number of copies of $\mathbb S^2\times\mathbb S^2$ and any even number of copies of $\mathds CP^2$ have even $\beta_2$.
\end{rem}
\subsection{Orthogonal geodesic chords}
Our motivation for the result of Theorem~A comes from a classical problem in Riemannian geometry, which consists in finding lower estimates on the number of
geodesics departing and arriving orthogonally to the boundary of a compact Riemannian manifold.
These objects are called \emph{orthogonal geodesic chords} (OGC).
It is interesting to observe that there are manifolds diffeomorphic to Euclidean balls that have no OGC's,
see for instance \cite{bos}.
Orthogonal geodesics chords for metrics in a ball have a special interest in the case where the boundary $\mathbb S^m$ of $B^{m+1}$ is strictly concave. Namely, in this situation, a multiplicity result for OCG's yields an analogous multiplicity result 
for brake orbits of natural Hamiltonians or Lagrangian in a potential well (see for instance~\cite{cag}).

In order to apply Theorem~A to obtain information on the number of OGC's, let us consider the following situation. Given a compact Riemannian manifold
$(M,g)$ with boundary $\partial M$, let $\exp$ denote the corresponding exponential map. Let $\vec\nu$ be the unit normal field along $\partial M$ pointing inwards. The metric $g$ will be said to be \emph{regular} (with respect to $\partial M$) if there exists a (necessarily smooth)
function $s_g:\partial M\to\left]0,+\infty\right[$ such that, for any $p\in\partial M$, the geodesic $\big[0,s_g(p)\big]\ni t\mapsto\exp_p(t\cdot \vec\nu_p)$ meets transversally $\partial M$ at $t=s_g(p)$. In this situation, we will call $s_g$ the
\emph{crossing time function} of the metric $g$.

It is easy to see that the set of regular metrics on a given manifold with boundary $M$, that will be denoted by 
$\mathrm{Reg}(M)$, is open
in the $C^1$-topology.
We also define \emph{non-focal} a metric on $M$ for which there are no $\partial M$-focal points along $\partial M$.
Also in this case, it is not hard to show that non-focal metrics form an open subset in the $C^2$-topology, see Proposition~\ref{thm:nofocalpoints}.

Let us denote by $\mathrm{Reg}_*(M)$ \emph{the set of non-focal regular metrics on $M$.}
Our main interest is in the case when $M$ is diffeomorphic to the unit ball $B^{m+1}$ in the Euclidean space $\mathds R^{m+1}$. In this case, the set of regular and non-focal metrics is an open subset of all Riemannian metrics containing, for instance, the set of radially symmetric metrics,
see Corollary~\ref{thm:perturbradsym}.

As an application of Theorem~A, we prove the following:
\begin{teoB}
For a \emph{generic}\footnote{Here generic is in meant in the topological sense. A subset of a topological set is generic if it contains a residual set (countable intersection of open dense subsets).}  set of metrics $g$ in $\mathrm{Reg}_*(B^{m+1})$, there are at least $(m+1)$ distinct orthogonal geodesic chords in $(B^{m+1},g)$.
\end{teoB}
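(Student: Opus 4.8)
The plan is to reduce the existence of orthogonal geodesic chords to a Morse-theoretic problem for a single function on the boundary sphere $\partial B^{m+1}\cong\mathbb S^m$ and then invoke Theorem~A. Fix $g\in\mathrm{Reg}_*(B^{m+1})$, let $s_g:\partial M\to\left]0,+\infty\right[$ be its crossing time function, and set $f:=\tfrac12 s_g^2$. By regularity $f$ is a smooth function on $\partial M\cong\mathbb S^m$. Writing $\gamma_p(t)=\exp_p(t\,\vec\nu_p)$ and letting $\Phi(p):=\gamma_p\big(s_g(p)\big)\in\partial M$ be the first-return map, a first-variation computation gives $\mathrm df_p(v)=s_g(p)\,\langle\gamma_p'(s_g(p)),\mathrm d\Phi_p v\rangle$, so $p$ is a critical point of $f$ precisely when the arrival velocity $\gamma_p'(s_g(p))$ is $g$-orthogonal to the image of $\mathrm d\Phi_p$. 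Since the non-focal hypothesis forces $\mathrm d\Phi_p:T_p\partial M\to T_{\Phi(p)}\partial M$ to be an isomorphism, this happens exactly when $\gamma_p$ meets $\partial M$ orthogonally at both endpoints, i.e.\ exactly when $\gamma_p$ is an OGC. Thus $\mathrm{Crit}(f)$ is in bijection with the set of \emph{oriented} OGC's (an OGC together with a choice of initial endpoint), a two-to-one correspondence with the geometric chords.

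The crucial point is that $f$ is automatically \emph{Morse-even}, which I would prove by exhibiting a fixed-point-free involution of $\mathrm{Crit}(f)$ preserving the Morse index. The map $\Phi$ restricts to such an involution: if $\gamma_p$ is an OGC then its reverse is the normal geodesic issuing from $q:=\Phi(p)$, so $\Phi(q)=p$ and $f(q)=f(p)$. No fixed point can occur, for $p=\Phi(p)$ would force the reversed geodesic to coincide with $\gamma_p$ by uniqueness of geodesics, hence $\gamma_p(t)=\gamma_p(s_g(p)-t)$, impossible for a unit-speed curve. For the index equality I would use that a variation of $p$ in $\partial M$ moves $\gamma_p$ through normal geodesics, so the associated variation field $V_v$ is the $\partial M$-Jacobi field along $\gamma_p$ with $V_v(0)=v$; reversing this family yields exactly the normal-geodesic family issuing from the $\Phi$-images, whose variation field is the reverse of $V_v$. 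Since the index form of the energy is invariant under reversal, one gets $\mathrm{Hess}_p f(v,v)=\mathrm{Hess}_q f(\mathrm d\Phi_p v,\mathrm d\Phi_p v)$; as $\mathrm d\Phi_p$ is an isomorphism the two Hessians are congruent, so they share index and nullity. Consequently the critical points of each fixed index occur in $\Phi$-pairs, and $\kappa_k$ is even for every $k$ as soon as $f$ is a Morse function.

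It remains to ensure that $f$ is Morse for $g$ in a residual subset of $\mathrm{Reg}_*(B^{m+1})$, and this is the main obstacle; it is the only place a genericity argument enters. A degenerate OGC corresponds to a nonzero null direction of $\mathrm{Hess}_p f$, i.e.\ to a $\partial M$-Jacobi field along $\gamma_p$ that is normal to $\partial M$ at \emph{both} ends in the sense of the index form, a two-sided degeneracy that the non-focal condition alone does not exclude. I would phrase this as a transversality problem: regard the tangential component of the arrival velocity as a section of a suitable bundle over $\partial M\times\mathrm{Reg}_*(B^{m+1})$, show that metric perturbations render this section transverse to the zero section, and apply the Sard--Smale theorem to conclude that the set of $g$ for which all OGC's are non-degenerate is residual. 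Because $\mathrm{Reg}_*$ is open in the $C^2$-topology, sufficiently small perturbations stay admissible, so this residual set is a genuine generic subset of $\mathrm{Reg}_*(B^{m+1})$.

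Finally I would invoke Theorem~A. For generic $g$ the function $f$ is a Morse-even function on $\partial M\cong\mathbb S^m$, hence by Theorem~A it has critical points of every index $k\in\{0,\dots,m\}$; each index class being even and nonempty has cardinality at least $2$, so $f$ has at least $2(m+1)$ critical points. The fixed-point-free involution $\Phi$ groups these into $\Phi$-pairs, and each pair $\{p,\Phi(p)\}$ is precisely one geometric orthogonal geodesic chord together with its reverse. Therefore $(B^{m+1},g)$ carries at least $\tfrac12\cdot 2(m+1)=m+1$ distinct orthogonal geodesic chords, which is the assertion of Theorem~B.
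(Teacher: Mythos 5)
There is a genuine gap, and it sits precisely at the point the paper flags as the central subtlety. Your claim that the Hessians of $f=\tfrac12 s_g^2$ at $p$ and at $q=\Phi(p)$ are congruent via $\mathrm d\Phi_p$ ``since the index form of the energy is invariant under reversal'' is false in general. The Hessian of $f$ at $p$ is \emph{not} the full index form $I_p$ but its restriction to the space $\mathbb J_p$ of $\partial M$-Jacobi fields issuing from $p$ (Proposition~\ref{thm:secondorder}(b)), and this subspace is not reversal-symmetric: for $t\ne0$ the geodesics $\gamma_{\rho(t)}$ in your variation arrive at $\partial M$ \emph{non}-orthogonally, so their reversals are not normal geodesics issuing from $\Phi(\rho(t))$, and reversing the family does not produce the variation fields computing $\mathrm{Hess}_q f$. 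What reversal invariance actually gives is equality of the Morse indices of the OGC $\gamma_p$ and its reverse as critical points of the free-boundary action functional; but by the Morse index theorem (Corollary~\ref{thm:MIT}, from \cite{PicTau99}) one has $\mathrm{ind}_p(f)=\mathrm{ind}(\gamma_p)-\#\{\partial M\text{-focal points along }\gamma_p\}$, and the focal counts along the two orientations of the same chord can differ. Figure~\ref{fig:2} of the paper is an explicit counterexample: a flat region whose boundary contains a spherical cap and a planar piece admits a nondegenerate OGC with one focal point in one direction and none in the other, so $s_g$ is Morse but \emph{not} even-Morse there. Since your pairing argument nowhere uses the non-focal hypothesis, it would ``prove'' even-Morseness in that example as well, which is a contradiction. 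The paper closes this gap differently: using stability of focal points \cite{MerPicTau02}, the number of focal points along $\gamma_r$ varies continuously in $r\in\partial M$ and can jump only when a focal point reaches $\partial M$, which non-focality forbids; connectedness of $\partial M\cong\mathbb S^m$ then forces this count to be constant, hence equal for $\gamma_p$ and $\gamma_{q_p}$, and only then do the indices of $f$ at $p$ and $q_p$ agree (Corollary~\ref{thm:Morse+equaindex}).

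The rest of your outline tracks the paper: the first-variation identification of $\mathrm{Crit}(f)$ with oriented OGC's (Proposition~\ref{thm:critpointssg}), the fixed-point-free involution $p\mapsto q_p$ (Corollary~\ref{thm:pcritiffqcrit}; your midpoint argument excluding fixed points is fine), and the final count via Theorem~A are all sound. Your genericity step is also only a sketch — the paper instead derives $\partial$-bumpiness from \cite{BetGia2010} (Proposition~\ref{thm:maingenericity}), observing that strongly degenerate periodic geodesics cannot occur since OGC's are never periodic — but a Sard--Smale transversality argument of the kind you indicate is a standard, fixable route. The index-pairing step, by contrast, is a substantive error: as written, your proof of even-Morseness fails, and repairing it requires exactly the focal-point stability argument the paper supplies.
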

More precisely, the result of Theorem~B holds for all metrics in $\mathrm{Reg}_*(B^{m+1})$ for which every OGC is nondegenerate in an appropriate sense, see the discussion after Corollary~\ref{thm:pcritiffqcrit} and Section~\ref{sub:3.2} for details.

When $M$ is convex and homeomorphic to the $m+1$-dimensional disk the multiplicity problem for OGC's is studied in 
\cite{bos,LustSchn}. If $M$ is concave there is an existence result  and a multiplicity result of two OGC's  (see \cite{pre}
and the references therein). To the authors' knowledge, Theorem~ B is the first result about multiplicity of OGC's without convexity or concavity assumption.\smallskip

For the Morse-theoretical aspects in the proof of Theorem~B, one of the key ingredients will be an index theorem for orthogonal geodesic chords (see Corollary~\ref{thm:MIT} and Corollary~\ref{thm:Morse+equaindex}), in the formulation given in \cite{PicTau99}. This result, together with a stability result for focal points proved in \cite{MerPicTau02}, is used to prove that the crossing time function $s_g$ is even-Morse, providing the desired link between even-Morse functions and orthogonal geodesic chords.

\subsection{Brake orbits of Lagrangian systems}
\label{sub:brakehom}
The result of Theorem~B can be applied to prove a new multiplicity
result for brake orbits, as illustrated below. We will present here a Lagrangian formulation of the brake orbits problem. An equivalent formulation can be given for periodic solutions of Hamiltonian systems, via Legendre transform.

Let $(\mathfrak M^{m+1},\mathfrak g)$ be a Riemannian manifold (without boundary), representing the configuration space of some dynamical systems, and let $V:\mathfrak M\to\mathds R$ be a smooth function, representing the potential energy of some conservative force acting on the system. One looks for periodic solutions $x:[0,T]\to\mathfrak M$ of the Lagrangian systems:
\begin{equation}\label{eq:lagrsystem}
\tfrac{\mathrm D}{\mathrm dt}\dot x=-\nabla V,
\end{equation}
where $\tfrac{\mathrm D}{\mathrm dt}$ denotes the covariant derivative of the Levi--Civita connection of $\mathfrak g$ for vector fields along $x$, and $\nabla V$ is the gradient of $V$. Solutions of \eqref{eq:lagrsystem} satisfy the conservation of energy law $\frac12\mathfrak g(\dot x,\dot x)+V(x)=E$, where $E$ is a real constant called the \emph{energy} of the solution $x$. It is a classical problem to give estimate of periodic solutions of \eqref{eq:lagrsystem} having a fixed value of the energy $E$. This problem has been, and still is, the main topic of a large amount of literature, also for autonomous Hamiltonian systems, 
see for instance \cite{LiuLong,LZ,Long,rab} an the references therein. We will give here a very short account of a geometric approach to periodic solutions of \eqref{eq:lagrsystem}

By the classical Maupertuis principle, solutions of \eqref{eq:lagrsystem} having energy $E$ are, up to a parameterization, geodesics in the conformal metric:
\begin{equation}\label{eq:maupertuismetric}
g_E=\big(E-V(p)\big)\cdot\mathfrak g,
\end{equation}
defined in the closed $E$-sublevel $M_E=V^{-1}\big(\left]-\infty,E\right]\big)$ of $V$. Observe that, in fact, $g_E$ degenerates on the boundary $\partial M_E=V^{-1}(E)$. Among all periodic solutions of \eqref{eq:lagrsystem}, historical importance is given to a special class called \emph{brake orbits}; these are ``pendulum-like'' solutions, that oscillate with constant frequency along a trajectory that joins two endpoints lying in $V^{-1}(E)$. Thus, brake orbits correspond to $g_E$-geodesics
in $M_E$ with endpoints in $\partial M_E$, or, more precisely, to $g_E$-geodesics
$\gamma:\left]0,T\right[\to V^{-1}\big(\left]-\infty,E\right[\big)$, with $\lim\limits_{t\to0^-}\gamma(t)$ and $\lim\limits_{t\to T^-}\gamma(t)$ in $\partial M_E$.

For such degenerate situation, it has been proved in \cite{cag} that, if $E$ is a regular value of the function $V$ (which implies in particular that $\partial M_E$ is a smooth hypersurface of $\mathfrak M$), then $g_E$ defines a distance-to-the-boundary function $\mathrm{dist}_E:M_E\to\left[0,+\infty\right[$ which is smooth in the interior of $M_E$ and extends continuously to  $0$ on the boundary $\partial M_E$. Moreover, if $\delta>0$ is small enough, then any OGC in the Riemannian manifold $M=\mathrm{dist}_E^{-1}\big(\left[\delta,+\infty\right[\big)$ endowed with the metric $g_E$ (which is now non-singular) can be extended uniquely to a $g_E$-geodesic $\gamma$ in $M_E$ with endpoints in $\partial M_E$, as above. In conclusion, any result on multiplicity of OGC's can be reformulated to a multiplicity result for brake orbits at level  a fixed regular energy level of a conservative dynamical system.
\medskip

A very famous conjecture due to Seifert, see \cite{seifert}, asserts that, given a Lagrangian system as in \eqref{eq:lagrsystem}, if the sublevel $V^{-1}\big(\left]-\infty,E]\right]\big)$ is homeomorphic to an $(m+1)$-ball $B^{m+1}$, then there are at least $m+1$ distinct brake orbits. This estimate is known to be sharp, i.e., there are examples of Lagrangian systems having energy levels homeomorphic to an $(m+1)$-ball and admitting exactly $m+1$ distinct brake orbits. A proof of Seifert's conjecture in its full generality is still open, but the question has been solved affirmatively in some cases. When $V$ is even and convex, multiplicity results  are obtained  in \cite{LZ,LZZ,Z1,Z2,Z3}.
In particular in \cite{LZ} there is the proof of the Seifert conjecture for euclidean metrics and even and convex potentials.

When the $E$-sublevel $V^{-1}\big(\left]-\infty,E\right]$ has the topology of the annulus,
the multiplicity of brake orbits is studied in depth in \cite{arma} and \cite{JDE}.
\smallskip

Theorem~B yields the following contribution to Seifert's conjecture:
\begin{teoC}
Let $E$ be a regular value of $V$, such that $V^{-1}\big(\left]-\infty,E\right]\big)$ is homemorphic to an $(m+1)$-ball
$B^{m+1}$.
Assume that there exists $\delta>0$ sufficiently small such that the Riemannian manifold $M=\mathrm{dist}_E^{-1}\big(\left[\delta,+\infty\right[\big)$ endowed with the metric $g_E$ satisfies the assumptions of Theorem~B. Then there are at least $m+1$ distinct brake orbits of energy $E$ for the Lagrangian system~\eqref{eq:lagrsystem}.\qed
\end{teoC}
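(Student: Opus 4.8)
The plan is to treat Theorem~C as a bookkeeping consequence of Theorem~B together with the dictionary between orthogonal geodesic chords and brake orbits recalled in Section~\ref{sub:brakehom}; no new geometric analysis is needed, only a careful count. The first move is to apply Theorem~B to the Riemannian manifold $(M,g_E)$ itself. This is legitimate precisely because of the standing hypothesis: $M=\mathrm{dist}_E^{-1}\big(\left[\delta,+\infty\right[\big)$, equipped with the (now non-degenerate) conformal metric $g_E$ of \eqref{eq:maupertuismetric}, is assumed to satisfy the assumptions of Theorem~B. In particular $M$ is diffeomorphic to $B^{m+1}$ and $g_E$ belongs to the subset of $\mathrm{Reg}_*(B^{m+1})$ for which the conclusion of Theorem~B holds. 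Theorem~B then furnishes at least $m+1$ distinct orthogonal geodesic chords $\gamma_1,\ldots,\gamma_{m+1}$ in $(M,g_E)$.

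Next I would transport each $\gamma_i$ to a brake orbit. By the result of \cite{cag} recalled above, provided $\delta>0$ is small enough each OGC $\gamma_i$ extends \emph{uniquely} to a $g_E$-geodesic $\widehat\gamma_i$ in $M_E$ whose endpoints lie on $\partial M_E=V^{-1}(E)$; applying the Maupertuis principle \eqref{eq:maupertuismetric} in reverse, the reparameterization of $\widehat\gamma_i$ is a solution of \eqref{eq:lagrsystem} of energy $E$ oscillating between two points of $V^{-1}(E)$, that is, a brake orbit.

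The step I expect to require the most attention is verifying that the assignment $\gamma_i\mapsto\widehat\gamma_i$ is injective, so that the $m+1$ chords yield $m+1$ genuinely distinct brake orbits rather than fewer. I would argue that the extension is reversible. For $\delta$ small, a brake orbit $\widehat\gamma$ departs from $\partial M_E$, where $\mathrm{dist}_E=0$, increases its distance to the boundary monotonically near each endpoint, and therefore meets the level set $\{\mathrm{dist}_E=\delta\}$ in exactly two points, crossing it orthogonally there; this orthogonal crossing is the very mechanism by which \cite{cag} produces OGC's. Hence the portion of $\widehat\gamma$ lying in $M$ is a single OGC, and truncation at distance $\delta$ is a two-sided inverse of the extension map. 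Injectivity follows, and we conclude that there are at least $m+1$ distinct brake orbits of energy $E$, as claimed.
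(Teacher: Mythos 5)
Your proposal is correct and matches the paper's intent exactly: the paper states Theorem~C with no separate proof (note the \qed{} in the statement), treating it as an immediate consequence of Theorem~B combined with the OGC--brake-orbit correspondence from \cite{cag} recalled in Section~\ref{sub:brakehom}, which is precisely the dictionary you invoke. Your additional care about injectivity of the extension map $\gamma_i\mapsto\widehat\gamma_i$ (via truncation at the level set $\{\mathrm{dist}_E=\delta\}$ being a two-sided inverse) is a detail the paper delegates wholesale to \cite{cag}, so you have simply made the implicit argument explicit rather than taken a different route.
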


In particular, from Proposition~\ref{thm:nofocalpoints} below we obtain the following:
\begin{corD}\label{thm:seifertradially}
Seifert conjecture is generically true in a $C^2$-open set of potentials $V$ that contains the  ones that are \emph{rotationally symmetric} at level $E$.\qed
\end{corD}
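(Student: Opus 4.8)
The plan is to obtain Corollary~D as a direct consequence of Theorem~B and Theorem~C, transporting the conclusion from the space of metrics to the space of potentials through the Maupertuis correspondence $V\mapsto g_E$. First I would fix a potential $V_0$ that is rotationally symmetric at level $E$, for which $E$ is a regular value and $V_0^{-1}\big(\left]-\infty,E\right]\big)$ is a ball, and record that the associated Maupertuis metric $g_E=(E-V_0)\cdot\mathfrak g$ is radially symmetric on the truncated manifold $M=\mathrm{dist}_E^{-1}\big(\left[\delta,+\infty\right[\big)$ of Theorem~C. By Corollary~\ref{thm:perturbradsym}, radially symmetric metrics belong to $\mathrm{Reg}_*(B^{m+1})$, so $g_E\in\mathrm{Reg}_*(B^{m+1})$ and the hypotheses of Theorem~C are met for $V_0$.

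Next I would introduce the correspondence $\Phi\colon V\mapsto g_E$, defined on a $C^2$-neighborhood of $V_0$. The conditions that $E$ be a regular value, that the sublevel be a ball, and that a single truncation parameter $\delta$ work are all $C^2$-open, hence persist under small perturbations of $V_0$; and on the truncated region $M$, where $g_E$ is smooth and nondegenerate, the metric $g_E$ depends continuously (in $C^1$, resp.\ $C^2$) on $V$ in the $C^2$-topology. Since regular metrics form a $C^1$-open set and non-focal metrics a $C^2$-open set by Proposition~\ref{thm:nofocalpoints}, the set $\mathrm{Reg}_*(B^{m+1})$ is open; continuity of $\Phi$ then produces a $C^2$-open neighborhood $\mathcal U$ of the rotationally symmetric potentials with $\Phi(\mathcal U)\subseteq\mathrm{Reg}_*(B^{m+1})$. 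This is exactly the $C^2$-open set of potentials asserted in the statement.

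It then remains to recover the genericity. For $V\in\mathcal U$, Theorem~B yields at least $m+1$ distinct OGC's in $(M,\Phi(V))$ whenever $\Phi(V)$ lies in the residual set $\mathcal R\subseteq\mathrm{Reg}_*(B^{m+1})$ of Theorem~B, and Theorem~C turns each such OGC into a distinct brake orbit of energy $E$ for \eqref{eq:lagrsystem}. The subtle point is that one \emph{cannot} simply pull $\mathcal R$ back along $\Phi$: the image of $\Phi$ is contained in the conformal class of $\mathfrak g$, a thin subset of all metrics, so the intersection of a residual set of $\mathrm{Reg}_*(B^{m+1})$ with this conformal family need not be residual there. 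Instead I would re-run the transversality argument underlying Theorem~B using variations of the potential itself: since $V$ ranges over an infinite-dimensional family of functions on $\mathfrak M$, perturbing $V$ gives enough freedom to make all OGC's of $g_E$ nondegenerate, so that the set $\mathcal R'=\{V\in\mathcal U:\text{all OGC's of }\Phi(V)\text{ are nondegenerate}\}$ is a countable intersection of $C^2$-open dense subsets of $\mathcal U$, hence residual. For every $V\in\mathcal R'$ the two theorems combine to give at least $m+1$ brake orbits, which is the generic validity of Seifert's conjecture on $\mathcal U$.

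I expect the main obstacle to be precisely this transfer of genericity through $\Phi$. Because $\Phi$ does not map onto an open set of metrics, the residual set of Theorem~B must be reconstructed intrinsically in the potential variable via a Sard--Smale transversality argument adapted to perturbations of $V$, and one must verify that such perturbations span the directions needed to achieve transversality of the OGC endpoint map. A secondary technical point, handled as in Theorem~C, is the control of $\Phi$ near the degenerate boundary, where both the domain $M$ and the metric $g_E$ depend on $V$ through $\mathrm{dist}_E$; this is circumvented by working on the truncated manifold $M=\mathrm{dist}_E^{-1}\big(\left[\delta,+\infty\right[\big)$, where $g_E$ is smooth and nondegenerate and the stability estimates already invoked apply.
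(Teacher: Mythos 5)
Your skeleton coincides with the paper's: Corollary~D is obtained by feeding Theorem~B into Theorem~C, and the $C^2$-open set of potentials is produced exactly as you produce it, from the openness of the regular and non-focal conditions (Proposition~\ref{thm:nofocalpoints}, part (a)) together with Corollary~\ref{thm:perturbradsym} applied to the radially symmetric Jacobi metric $g_E=(E-V)\cdot\mathfrak g$ of a rotationally symmetric potential. Note, however, that the paper states the corollary with no separate argument at all: it treats the genericity in the potential variable as an immediate pullback of the genericity of $\partial$-bumpy metrics established in Proposition~\ref{thm:maingenericity}. Where you go beyond the paper is precisely the point you flag yourself: the Maupertuis map $V\mapsto g_E$ has image inside the conformal class of $\mathfrak g$ (restricted to the truncated ball), which is nowhere dense in the space of all metrics, so residuality in $\mathrm{Reg}_*(B^{m+1})$ does not formally pull back to residuality in $V$. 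This is a genuine subtlety that the paper's one-line derivation glosses over, and your proposed remedy --- re-running the transversality argument of \cite{BetGia2010} with perturbations of $V$, i.e., with conformal perturbations of the metric supported where the chords live, and excluding strong degeneracy because the class of OGC's contains no periodic geodesics, exactly as in the proof of Proposition~\ref{thm:maingenericity} --- is the right mechanism to close the gap. Within your proposal this step remains an assertion rather than a proof (a bumpy-type theorem in the potential variable), so strictly speaking your argument is incomplete there; but since the paper offers nothing at this point, your version is more careful than the original, not less.

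Two smaller remarks. First, you inherit from the paper a gloss in the symmetric case: the definition of rotational symmetry at level $E$ constrains only $V$ (and the $\mathrm{SO}(m+1)$-action is merely continuous), whereas for $g_E$ to be radially symmetric on $M$, hence for Corollary~\ref{thm:perturbradsym} to apply, one also needs $\mathfrak g$ to be invariant under the action; this is clearly the intended reading. Second, your handling of the degenerate boundary --- fixing $\delta$, working on $M=\mathrm{dist}_E^{-1}\big(\left[\delta,+\infty\right[\big)$, and using the $C^2$-continuity of $V\mapsto g_E$ there --- is exactly the mechanism the paper relies on via \cite{cag} in Theorem~C, so no discrepancy arises on that point.
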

By a potential $V$ rotationally symmetric at level $E$ we mean that there is a continuous action of the rotation group $\mathrm{SO}(m+1)$ on $V^{-1}\big(\left]-\infty,E\right]\big)$ that makes the sublevel equivariantly diffeomorphic to the Euclidean ball $B^{m+1}$ with the canonical $\mathrm{SO}(m+1)$-action, and such that $V$ is constant along the orbits of this action.
\end{section}
\begin{section}{Variational Theory for OGC's}
Let us show how obtain multiple orthogonal geodesic chords in a compact Riemannian manifold with boundary $(M,g)$,
using a function on $\partial M$ whose critical points are OGC's. Next Proposition shows that, for a regular metric $g$, such function is precisely the crossing time function $s_g$. In order to prove this, let us introduce some notations.

For $p\in\partial M$, let $\mathcal S_p:T_p(\partial M)\to T_p(\partial M)$ denote the shape operator of $\partial M$ at $p$ in the normal direction $\vec\nu_p$.
Let $\exp^\perp:U\subset T(\partial M)^\perp\to M$ denote the normal exponential map of $g$ along $\partial M$;
for $p\in\partial M$, let $\gamma_p:\big[0,s_g(p)\big]\to M$ denote the geodesic $t\mapsto\exp_p(t\cdot\vec\nu_p)$.
Recall that a point $q\in M$ is a singular value of $\exp^\perp$ exactly when $q$ is \emph{focal} to $\partial M$.
If $q\in M$ is a singular value of $\exp^\perp$ and $v\in T_p(\partial M)^\perp$ is the corresponding critical point, so that
$\gamma_p(t_*)=q$ for some $t_*\in\left]0,s_g(p)\right]$, then
the kernel of $\mathrm d\exp^\perp(v)$ consists of \emph{$\partial M$-Jacobi fields} along the geodesic $\gamma_p$
that vanish at $t_*$. Recall that a Jacobi field along $\gamma_p$ is called a $\partial M$-Jacobi field if it satisfies
the initial conditions:
\begin{equation}\label{eq:ICond}
J(0)\in T_p(\partial M),\quad J'(0)+\mathcal S_p\big(J(0)\big)\in T_p(\partial M)^\perp,
\end{equation}
where $J'$ denotes the covariant derivative of $J$ along $\gamma_p$.

\begin{figure}
\begin{center}
\psfull \epsfig{file=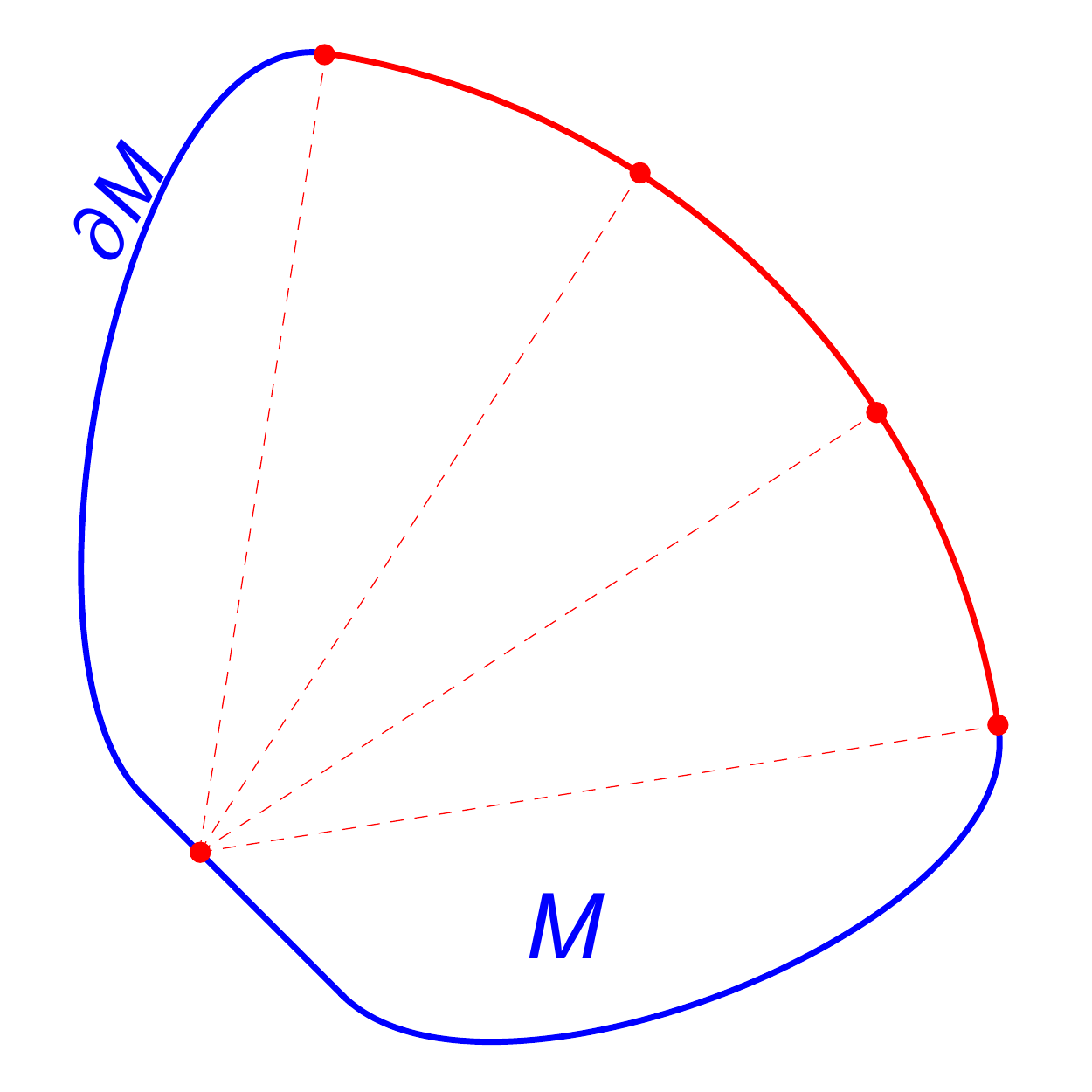, height=5cm} \caption{The picture represents a situation in which critical points of the function $s_g$ do not correspond to orthogonal geodesic chords (see Remark \ref{rem:counterex1}).
Here, $M$  is a compact subset of a Euclidean space. A portion of the boundary of $M$ (in red) is a spherical hypersurface, whose center, which is a $\partial M$-focal point, lies on $\partial M$. Orthogonal geodesic chords starting from this spherical surfaces converge and meet at the center; thus the function $s_g$ is constant in this spherical region of the boundary. However, these critical points do not correspond to OGC's in $M$.}
\label{fig:01}
\end{center}
\end{figure}

For $p\in\partial M$, denote by $q_p=\gamma_p\big(s_g(p)\big)\in\partial M$, and
let $\mathcal E_p\subset T_{q_p}M$ the image of the linear map $\mathrm d\exp^\perp\big(s_g(p)\vec\nu_p\big)$.
Equivalently:
\[\mathcal E_p=\Big\{J\big(s_g(p)\big):\text{$J$ is a $\partial M$-Jacobi field along $\gamma_p$}\Big\}.\]
When $q_p$ is not $\partial M$-focal, then $\mathcal E_p=T_{q_p}M$.
\begin{prop}\label{thm:critpointssg}
Assume $g$ regular. A point $p\in\partial M$ is critical for $s_g:\partial M\to\left]0,+\infty\right[$ if and only if the vector
$\dot\gamma_p\big(s_g(p)\big)\in T_{q_p}M$ is orthogonal to the intersection\footnote{%
We observe that, under the assumptions of Proposition~\ref{thm:critpointssg}, $q_p$ is $\partial M$-focal if and only if $\mathcal E_p\cap T_{q_p}\partial M\ne T_{q_p}\partial M$. This follows easily from the fact that $\mathcal E_p$ always contains $\dot\gamma_p\big((s_g(p)\big)$ which is transversal to $T_{q_p}\partial M$.}
$\mathcal E_p\cap T_{q_p}(\partial M)$.
In particular, if $q_p$ is not $\partial M$-focal, then $\gamma_p$ is an orthogonal geodesic chord in $M$.
\end{prop}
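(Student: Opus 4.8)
The plan is to realize the critical points of $s_g$ through the \emph{endpoint map} $F:\partial M\to\partial M$ given by $F(p)=\gamma_p\big(s_g(p)\big)=\exp_p\big(s_g(p)\,\vec\nu_p\big)=q_p$, which is well defined and smooth precisely because $g$ is regular. (Equivalently, one may write $\partial M$ locally as the zero set of a submersion and apply the implicit function theorem to the equation defining $s_g$; the two routes produce the same differential.) The first step is to compute $\mathrm dF_p$ on a vector $w\in T_p(\partial M)$ by differentiating the two-parameter geodesic variation $\alpha(\epsilon,t)=\exp_{p(\epsilon)}\big(t\,\vec\nu_{p(\epsilon)}\big)$, with $p(0)=p$ and $\dot p(0)=w$, while also letting the crossing time vary. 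This yields
\[\mathrm dF_p(w)=J_w\big(s_g(p)\big)+\big(\mathrm ds_g(p)\cdot w\big)\,\dot\gamma_p\big(s_g(p)\big),\]
where $J_w$ is the Jacobi field along $\gamma_p$ with $J_w(0)=w$ and $J_w'(0)=\nabla_w\vec\nu$. The decisive point is that, since $\vec\nu$ is the \emph{unit} normal field along $\partial M$, the covariant derivative of $\vec\nu$ along $\partial M$ is $\nabla_w\vec\nu=-\mathcal S_p(w)$ by definition of the shape operator; hence $J_w$ satisfies exactly the initial conditions \eqref{eq:ICond} and is a $\partial M$-Jacobi field.

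Write $s=s_g(p)$, $q=q_p$, $n=\dot\gamma_p(s)$, set $H=T_{q}(\partial M)$ and $W=\big\{J_w(s):w\in T_p(\partial M)\big\}$. Since $F$ takes values in $\partial M$, every $\mathrm dF_p(w)$ lies in $H$, while $n\notin H$ by the transversality built into the regularity of $g$. Reading the displayed identity modulo $H$ gives the first reduction: $p$ is critical for $s_g$, i.e.\ $\mathrm ds_g(p)=0$, if and only if $J_w(s)\in H$ for every $w$, that is, if and only if $W\subseteq H$. (If $p$ is critical this is immediate; conversely, if $W\subseteq H$, then $\big(\mathrm ds_g(p)\cdot w\big)\,n=\mathrm dF_p(w)-J_w(s)\in H$, and since $n\notin H$ this forces $\mathrm ds_g(p)\cdot w=0$ for all $w$.)

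It remains to match $W\subseteq H$ with the orthogonality statement. Recall that $\mathcal E_p$ is the image of $\mathrm d\exp^\perp(s\,\vec\nu_p)$, whose horizontal (base) directions reproduce the fields $J_w$ while the single fiber direction contributes $\dot\gamma_p(s)=n$; thus $\mathcal E_p=W+\mathds R\,n$. The crucial observation is that $n$ is orthogonal to $W$: for any Jacobi field $J$ the function $t\mapsto g\big(J(t),\dot\gamma_p(t)\big)$ is affine, and for $J=J_w$ both $g\big(J_w(0),\vec\nu_p\big)=g(w,\vec\nu_p)=0$ and $g\big(J_w'(0),\vec\nu_p\big)=g\big(-\mathcal S_p(w),\vec\nu_p\big)=0$ vanish, since $w$ and $\mathcal S_p(w)$ are tangent to $\partial M$; hence $g\big(J_w(s),n\big)=0$. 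Because $g(n,n)=1\ne0$, this also shows $n\notin W$, so $\mathcal E_p=W\oplus\mathds R\,n$, and $\mathcal E_p\cap H$ is a hyperplane of $\mathcal E_p$ of dimension $\dim W$. The equivalence is now linear algebra: if $W\subseteq H$ then $\mathcal E_p\cap H=W$, whence $n\perp\mathcal E_p\cap H$; conversely, if $n\perp\big(\mathcal E_p\cap H\big)$, then $n$ is orthogonal to the two subspaces $W$ and $\mathcal E_p\cap H$ of $\mathcal E_p$, which have equal dimension, so were they distinct their sum would be all of $\mathcal E_p$, forcing $g(n,n)=0$, a contradiction; therefore $W=\mathcal E_p\cap H\subseteq H$.

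Finally, for the ``in particular'' clause, when $q_p$ is not $\partial M$-focal one has $\mathcal E_p=T_{q_p}M$, so $\mathcal E_p\cap H=H=T_{q_p}(\partial M)$, and for a critical $p$ the orthogonality condition reads $\dot\gamma_p(s)\perp T_{q_p}(\partial M)$, i.e.\ $\gamma_p$ meets $\partial M$ orthogonally at $q_p$; as it already leaves $p$ along $\vec\nu_p$, it is orthogonal at both ends and hence an OGC. I expect the main obstacle to be the first step: carrying out the first-variation computation cleanly and recognizing that the prescription $J_w'(0)=\nabla_w\vec\nu$ is exactly the shape-operator term of \eqref{eq:ICond}. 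Once the identity $g\big(J_w(\cdot),\dot\gamma_p(\cdot)\big)\equiv0$ and the splitting $\mathcal E_p=W\oplus\mathds R\,n$ are in hand, the remainder is elementary.
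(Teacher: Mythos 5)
Your proof is correct, and it takes a genuinely different route from the paper's, although the two rest on the same family of Jacobi fields. The paper argues variationally: since $s_g>0$ it passes to $s_g^2$, identifies $s_g^2$ via \eqref{eq:sgaction} with the geodesic action functional evaluated on the affinely reparameterized geodesics $\sigma_p$, computes the first variation $\tfrac{\mathrm d}{\mathrm dt}\big\vert_{t=0}s_g^2(\rho(t))=2\int_0^1 g(\dot\sigma_p,J_w')\,\mathrm dt$, integrates by parts (the boundary term at $t=0$ dies because $J_w(0)\in T_p(\partial M)$ while $\dot\sigma_p(0)\perp T_p(\partial M)$) to obtain $2g\big(\dot\sigma_p(1),J_w(1)\big)$, and then invokes, as ``easily seen'', the surjectivity of $w\mapsto J_w(1)$ onto $\mathcal E_p\cap T_{q_p}(\partial M)$. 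You instead differentiate the endpoint map $F(p)=q_p$ directly, reduce criticality to the inclusion $W\subseteq H$ using only $n\notin H$, and convert that inclusion into the orthogonality statement through the splitting $\mathcal E_p=W\oplus\mathds R\,n$, the affine invariant $g(J,\dot\gamma_p)$ (yielding $n\perp W$), and a dimension count; note that your $\mathrm dF_p(w)$ is exactly the paper's $J_w(1)$ (their variation lets the crossing time vary, yours separates it off as the term $(\mathrm ds_g(p)\cdot w)\,n$), and pairing your identity with $n$ recovers their first-variation formula. The net effect is that you supply in full the linear algebra the paper compresses into ``easily seen'' — though that surjectivity also admits a quicker proof than your equal-dimension argument: given $x\in\mathcal E_p\cap H$, write $x=J_w(s)+c\,n$ with $w\in T_p(\partial M)$, $c\in\mathds R$; then $\mathrm dF_p(w)-x$ is a multiple of $n$ lying in $H$, hence zero, so no orthogonality of $n$ to $W$ is needed for the equivalence itself. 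What the paper's formulation buys is reusability: the identification of $s_g^2$ with the action functional is exactly what drives the second-order statements (Proposition~\ref{thm:secondorder} and Corollary~\ref{thm:MIT}); what yours buys is a self-contained first-order argument, independent of the variational framework, together with the explicit extra fact $n\perp W$.
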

\begin{proof}
Since $s_g>0$, the critical points of $s_g$ coincide with those of $s_g^2$.
For $p\in\partial M$, denote by $\sigma_p:[0,1]\to M$ the affinely reparameterized geodesic $\sigma_p(t)=\gamma_p\big(s_g(p)\cdot t\big)$, $t\in[0,1]$. Integration on $[0,1]$ gives:
\begin{equation}\label{eq:sgaction}
\int_0^1g\left(\dot\sigma_p,\dot\sigma_p\right)\,\mathrm dt=s_g(p)^2,
\end{equation}
i.e., $\sigma_g^2$ can be thought of the geodesic action functional of $g$ applied to the geodesic $\sigma_p$.
Let $p\in\partial M$ be fixed, and let $\rho:\left]-\varepsilon,\varepsilon\right[\to\partial M$ be a $C^1$-curve satisfying $\rho(0)=p$ and $\dot\rho(0)=w\in T_p(\partial M)$. Then, $p$ is critical for $s_g^2$ iff for any such $\rho$ one has $\frac{\mathrm d}{\mathrm dt}\big\vert_{t=0}s_g^2\left(\rho(t)\right)=0$. Using \eqref{eq:sgaction} one has:
\begin{equation}\label{eq:firstvarsg}
\tfrac{\mathrm d}{\mathrm dt}\big\vert_{t=0}s_g^2\left(\rho(t)\right)=2\int_0^1g\left(\dot\sigma_p,J_w'\right)\,\mathrm dt,
\end{equation}
where  $J_w$ is the Jacobi field along $\sigma_p$ given by:
\[J_w(s)=\tfrac{\mathrm d}{\mathrm dt}\big\vert_{t=0}\sigma_{\rho(t)}(s),\quad s\in[0,1].\]
Keeping in mind that $J_w(0)=w\in T_p(\partial M)$ and that $\dot\sigma_p(0)\in T_p(\partial M)^\perp$, integration by parts in \eqref{eq:firstvarsg} gives:
\[\tfrac{\mathrm d}{\mathrm dt}\big\vert_{t=0}s_g^2\left(\rho(t)\right)=2g\left(\dot\sigma_p(1),J_w(1)\right).\]
It is easily seen that the map $T_p(\partial M)\ni w\mapsto J_w(1)\in\mathcal E_p\cap T_{q_p}(\partial M)$ is surjective, and from this observation the thesis follows readily.
\end{proof}
\begin{rem}\label{rem:counterex1}
It is not hard to give examples of regular metrics that do not satisfy the non-focal property, and in which critical points of $s_g$ do not correspond to orthogonal geodesic chords. See Figure \ref{fig:01}.%~1.
\end{rem}

From Proposition~\ref{thm:critpointssg}, we obtain immediately the following:
\begin{cor}
\label{thm:pcritiffqcrit}
Let $g$ be regular and non-focal.
Then, a point $p$ is critical for $s_g$ if and only if the point $q_p=\gamma_p\big(s_g(p)\big)$ is critical for $s_g$. In this case, $s_g(p)=s_g(q_p)$,
and $\gamma_{q_p}=\gamma_p$ up to orientation.\qed
\end{cor}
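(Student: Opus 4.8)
The plan is to read the biconditional directly off Proposition~\ref{thm:critpointssg}, specialized by non-focality, and then to exploit the invariance of geodesics under reversal of orientation. First I would record what the non-focal hypothesis buys: since $g$ is non-focal, $q_p$ is never $\partial M$-focal, so $\mathcal E_p=T_{q_p}M$ and hence $\mathcal E_p\cap T_{q_p}(\partial M)=T_{q_p}(\partial M)$. Proposition~\ref{thm:critpointssg} then collapses to the clean statement that $p$ is critical for $s_g$ if and only if $\dot\gamma_p\big(s_g(p)\big)\perp T_{q_p}(\partial M)$, i.e.\ if and only if $\gamma_p$ meets $\partial M$ orthogonally at its far endpoint $q_p$. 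As $\gamma_p$ already leaves $p$ orthogonally (it departs along $\vec\nu_p$), this says precisely that $p$ is critical if and only if $\gamma_p$ is an orthogonal geodesic chord.

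The substantive content is the forward implication combined with the reversal. Assume $p$ is critical, so $\gamma_p$ is an OGC and $\dot\gamma_p\big(s_g(p)\big)\perp T_{q_p}(\partial M)$; since $\gamma_p$ is unit-speed and crosses $\partial M$ transversally from the inside, this exit vector is the \emph{outward} unit normal at $q_p$, so $-\dot\gamma_p\big(s_g(p)\big)=\vec\nu_{q_p}$. I would then consider the reversed geodesic $\beta(t):=\gamma_p\big(s_g(p)-t\big)$. It is unit-speed with $\beta(0)=q_p$ and $\dot\beta(0)=-\dot\gamma_p\big(s_g(p)\big)=\vec\nu_{q_p}$, so by uniqueness of geodesics with prescribed initial data $\beta=\gamma_{q_p}$; in particular $\gamma_{q_p}$ is $\gamma_p$ traversed backwards, i.e.\ $\gamma_{q_p}=\gamma_p$ up to orientation. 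Moreover $\beta$ sweeps out exactly the open arc $\gamma_p\big((0,s_g(p))\big)$, which lies in the interior of $M$, so the first return of $\beta$ to $\partial M$ occurs at parameter $s_g(p)$, at the point $\beta\big(s_g(p)\big)=\gamma_p(0)=p$. Hence $s_g(q_p)=s_g(p)$ and $q_{q_p}=p$. Finally, $\gamma_{q_p}=\beta$ is the same chord as $\gamma_p$, so it is again an OGC, and by the first paragraph $q_p$ is critical.

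For the converse I would argue by symmetry, the two orthogonal endpoints $p$ and $q_p$ of the chord $\gamma_p$ playing interchangeable roles. Concretely, if $q_p$ is critical then $\gamma_{q_p}$ is an OGC, and the same reversal argument applied to $\gamma_{q_p}$ identifies its orientation-reversal as the inward-normal geodesic issuing from its far endpoint $q_{q_p}$ and returning orthogonally to $q_p$; identifying $q_{q_p}$ with $p$ — equivalently, using that the forward step exhibits $p\mapsto q_p$ as an involution on the critical locus — then yields that $p$ is critical. Together with the forward implication this gives the biconditional, and the accompanying conclusions $s_g(p)=s_g(q_p)$ and $\gamma_{q_p}=\gamma_p$ up to orientation are exactly what the reversal produced.

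The step requiring care — and the place where non-focality is indispensable — is the identification $\dot\beta(0)=\vec\nu_{q_p}$, i.e.\ the passage from ``$\gamma_p$ returns transversally at $q_p$'' to ``$\gamma_p$ returns orthogonally at $q_p$.'' Without non-focality one only knows transversality of the return, the reversed geodesic $\beta$ need not be the inward-normal geodesic $\gamma_{q_p}$, and the pairing $q_{q_p}=p$ can fail; it is Proposition~\ref{thm:critpointssg} together with $\mathcal E_p=T_{q_p}M$ that upgrades criticality to genuine orthogonality at $q_p$, which is precisely what makes the reversal land on $\gamma_{q_p}$ and renders the two endpoints symmetric.
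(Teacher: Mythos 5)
Your forward direction is exactly the argument the paper has in mind (the paper states the corollary with no separate proof, as ``immediate'' from Proposition~\ref{thm:critpointssg}), and you carry it out correctly and completely: non-focality gives $\mathcal E_p=T_{q_p}M$, so Proposition~\ref{thm:critpointssg} reduces criticality of $p$ to orthogonality of $\dot\gamma_p\big(s_g(p)\big)$ to $T_{q_p}(\partial M)$; then the identification $-\dot\gamma_p\big(s_g(p)\big)=\vec\nu_{q_p}$, uniqueness of geodesics with prescribed initial data, and the first-crossing characterization of $s_g$ give $\gamma_{q_p}=\gamma_p$ up to orientation, $s_g(q_p)=s_g(p)$, $q_{q_p}=p$, and hence criticality of $q_p$. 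This is precisely the content the paper uses later (e.g.\ in the proof of Corollary~\ref{thm:Morse+equaindex}).

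The gap is in your converse paragraph, which is circular. You propose to ``identify $q_{q_p}$ with $p$ --- equivalently, using that the forward step exhibits $p\mapsto q_p$ as an involution on the critical locus,'' but the equality $q_{q_p}=p$ was derived in your forward step \emph{from} the criticality of $p$: without it, $\gamma_{q_p}$ is just the inward-normal geodesic issuing from $q_p$, a priori unrelated to $\gamma_p$, and knowing that $q_p$ is critical only tells you that $\gamma_{q_p}$ is an OGC ending at some point $q_{q_p}$ which there is no reason to equate with $p$. The involution property holds only on the critical locus, so invoking it to prove $p$ is critical presupposes the conclusion. Indeed, read literally, the implication ``$q_p$ critical $\Rightarrow p$ critical'' does not follow from Proposition~\ref{thm:critpointssg} at all: criticality of $q_p$ is a condition on the geodesic $\gamma_{q_p}$, not on $\gamma_p$, and nothing in the hypotheses excludes a non-critical $p$ whose normal geodesic happens to first meet $\partial M$ obliquely at a point $q_p$ that is an endpoint of some \emph{other} orthogonal geodesic chord. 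The corollary's ``if and only if'' must therefore be understood in the symmetric-pairing sense that your forward step already establishes: when $p$ is critical, the two endpoints of the chord $\gamma_p$ play interchangeable roles, with $s_g(p)=s_g(q_p)$ and $q_{q_p}=p$. Your proof would be correct, and would coincide with the paper's intended derivation, after deleting or rephrasing the literal-converse paragraph; as written, that paragraph asserts more than can be proved.
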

Proposition~\ref{thm:critpointssg} gives a first order variational principle relating orthogonal geodesic chords in $M$ to critical points of a smooth function on $\partial M$. For our purposes, we need a related second order variational principle, relating nondegeneracy and Morse index of OGC's in $M$ and critical points of $s_g$.
Let us recall a few facts from the variational theory of OGC's.

Assume that $\gamma_p:\big[0,s_g(p)\big]\to M$ is an OGC in $M$, i.e., that
$\dot\gamma_p\big(s_g(p)\big)\in T_{q_p}(\partial M)^\perp$.
The \emph{index form} along $\gamma_p$ is the symmetric bilinear form $I_p$ defined on the vector space $\mathcal V_p$ of (piecewise smooth) vector fields $V$ along $\gamma_p$ satisfying $V(0)\in T_p(\partial M)$ and $V\big(s_g(p)\big)\in T_{q_p}(\partial M)$, defined\footnote{Observe that we have a different sign convention from the standard literature in the second boundary term of $I_p$, because $\mathcal S_{q_p}$ has been defined as the shape operator in the normal direction $\vec\nu_{q_p}$, and $\vec\nu_{q_p}=-\dot\gamma_p\big(s_g(p)\big)$. Similarly, the second condition in \eqref{eq:FCond} has a sign different from the standard literature.} by:
\begin{multline}\label{eq:indexform}
I_p(V,W)=\int_0^{s_g(p)}g(V',W')+g\big(R(\gamma_p',V)\gamma_p',W\big)\,\mathrm ds\\-\Big[g\big(\mathcal S_p(V(0)),W(0)\big)+g\big(\mathcal S_{q_p}(V(s_g(p))),W(s_g(p))\big)\Big],
\end{multline}
where $R$ is the curvature tensor of $g$, chosen with the sign convention \[R(X,Y)=[\nabla_X,\nabla_Y]-\nabla_{[X,Y]}.\]
It is well known that $I_p$ is the second variation of the geodesic action functional, defined in the set of paths with endpoints in $\partial M$,  at the critical point $\gamma_p$.
The OGC $\gamma_p$ is said to be \emph{nondegenerate} if $I_p$ is a nondegenerate bilinear form on $\mathcal V_p$, i.e., if $\gamma_p$ is a nondegenerate critical point of the geodesic action functional of $M$ in the space of paths with endpoints in $\partial M$. The kernel of $I_p$ is the space of $\partial M$-Jacobi fields $J$ along $\gamma_p$ that satisfy, in addition to \eqref{eq:ICond}, the following boundary condition at $t=s_g(p)$:
\begin{equation}\label{eq:FCond}
J\big(\sigma_g(p)\big)\in T_{q_p}(\partial M),\quad J'\big(s_g(p)\big)+\mathcal S_{q_p}\big(J(s_g(p))\big)\in T_{q_p}(\partial M)^\perp.
\end{equation}
Thus, $\gamma_p$ is nondegenerate if and only if there exists no non-trivial Jacobi field $J$ along $\gamma$ satisfying \eqref{eq:ICond} and
\eqref{eq:FCond}. The \emph{Morse index} of $\gamma_p$ is the index of the symmetric bilinear form $I_p$, which is the dimension of a maximal
subspace of $\mathcal V_p$ on which $I_p$ is negative definite. Recall that this is a (finite) nonnegative integer, that can be computed in terms of some focal invariants of $\partial M$, see \cite{PicTau99} for details.
\begin{prop}\label{thm:secondorder}
Assume $g$ regular. Let $p\in\partial M$ be a critical point of $s_g$, and assume that $q_p\in\partial M$ is not $\partial M$-focal along $\gamma_p$.
Then:
\begin{itemize}
\item[(a)] $p$ is a nondegenerate critical point of $s_g$ if and only if $\gamma_p$ is a nondegenerate OGC;
\item[(b)] the Hessian of $\frac12s_g^2$ at the critical point $p$ is identified with the symmetric bilinear form:
\begin{equation}\label{eq:secdersg2}
\mathbb J_p\times\mathbb J_p\ni (J_1,J_2)\longmapsto g\Big(J_1'(s_g(p))-\mathcal S_{q_p}\big(J_1(s_g(p))\big),J_2(s_g(p))\Big)\in\mathds R,
\end{equation}
where $\mathbb J_p$ is the vector space of all $\partial M$-Jacobi fields $J$ along $\gamma_p$ that satisfy \[J\big(s_g(p)\big)\in T_{q_p}(\partial M).\] Its index is less than or equal to the Morse index of the OGC $\gamma_p$.
\end{itemize}
\end{prop}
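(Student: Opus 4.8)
The plan is to realize $\tfrac12 s_g^2$ as the pull-back of the geodesic energy functional under the ``geodesic section'' $p\mapsto\sigma_p$, and to exploit that at a critical point $p$ the chord $\gamma_p$ is itself an OGC, hence a critical point of that functional. Concretely, let $\Omega$ denote the space of paths $x:[0,1]\to M$ with $x(0),x(1)\in\partial M$, let $E(x)=\tfrac12\int_0^1 g(\dot x,\dot x)\,\mathrm dt$, and define $\Phi:\partial M\to\Omega$ by $\Phi(p)=\sigma_p$. By \eqref{eq:sgaction} one has $\tfrac12 s_g^2=E\circ\Phi$, and the critical points of $E$ on $\Omega$ are exactly the constant-speed geodesics orthogonal to $\partial M$ at both endpoints, i.e.\ the OGC's. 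Since $p$ is critical for $s_g$ and $q_p$ is not $\partial M$-focal, Proposition~\ref{thm:critpointssg} ensures that $\sigma_p$ is an OGC, so $\mathrm dE(\Phi(p))=0$. The first step is the chain rule for the Hessian at a critical point of the outer map, where the term involving the second derivative of $\Phi$ is annihilated by $\mathrm dE(\Phi(p))=0$, leaving
\[\mathrm{Hess}\big(\tfrac12 s_g^2\big)(p)[w_1,w_2]=\mathrm{Hess}\,E\big(\Phi(p)\big)\big[\mathrm d\Phi(p)w_1,\mathrm d\Phi(p)w_2\big].\]
Here $\mathrm{Hess}\,E(\sigma_p)$ agrees, up to the positive constant produced by the affine reparameterization $\sigma_p(t)=\gamma_p(s_g(p)\,t)$, with the index form $I_p$ of \eqref{eq:indexform}, and $\mathrm d\Phi(p)w$ is the variation field $J_w$ of the family $r\mapsto\sigma_{\rho(r)}$ with $\dot\rho(0)=w$, a Jacobi field along $\gamma_p$.

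Next I would identify the range of $\mathrm d\Phi(p)$ with $\mathbb J_p$. Each $J_w$ satisfies $J_w(0)=w\in T_p(\partial M)$; since every geodesic in the family leaves $\partial M$ orthogonally, $J_w$ obeys \eqref{eq:ICond}, and since the far endpoints $\sigma_{\rho(r)}(1)=q_{\rho(r)}$ remain on $\partial M$, one has $J_w(s_g(p))\in T_{q_p}(\partial M)$; thus $J_w\in\mathbb J_p$. The map $w\mapsto J_w$ is the inverse of the evaluation $J\mapsto J(0)$ on $\mathbb J_p$, and I would prove it is a linear isomorphism $T_p(\partial M)\to\mathbb J_p$ using non-focality. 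The space of all $\partial M$-Jacobi fields has dimension $m+1$; the evaluation $J\mapsto J(s_g(p))$ is injective on it \emph{precisely because} $q_p$ is not $\partial M$-focal, hence an isomorphism onto $T_{q_p}M$, so its restriction to $\mathbb J_p=\{J:J(s_g(p))\in T_{q_p}(\partial M)\}$ has dimension $m$. Injectivity of $w\mapsto J_w$ follows because the only $\partial M$-Jacobi field vanishing at $t=0$ is a multiple of $t\mapsto t\,\dot\gamma_p(t)$, whose value $s_g(p)\,\dot\gamma_p(s_g(p))$ is transversal to $T_{q_p}(\partial M)$ and so does not lie in $\mathbb J_p$; a dimension count then yields the isomorphism.

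On $\mathbb J_p$ the index form collapses by integration by parts: for Jacobi fields the bulk integrand integrates to $\big[g(J_1',J_2)\big]_0^{s_g(p)}$, and the terms at $t=0$, namely $-g(J_1'(0),J_2(0))-g\big(\mathcal S_p(J_1(0)),J_2(0)\big)$, cancel by \eqref{eq:ICond} (which gives $J_1'(0)+\mathcal S_p(J_1(0))\perp J_2(0)$), leaving exactly
\[I_p(J_1,J_2)=g\Big(J_1'(s_g(p))-\mathcal S_{q_p}\big(J_1(s_g(p))\big),\,J_2(s_g(p))\Big),\]
that is, the form \eqref{eq:secdersg2}. Combining the three steps, the Hessian of $\tfrac12 s_g^2$ at $p$ is identified, through the isomorphism $w\mapsto J_w$, with a positive multiple of \eqref{eq:secdersg2} on $\mathbb J_p$; since the conformal factor is positive it affects neither the index nor the nullity.

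It remains to deduce (a) and the inequality in (b). Because $\mathbb J_p\subset\mathcal V_p$, the index of \eqref{eq:secdersg2} is at most the index of $I_p$ on $\mathcal V_p$, i.e.\ the Morse index of $\gamma_p$, which is the inequality in (b). For (a) I would show that the null space of \eqref{eq:secdersg2} on $\mathbb J_p$ coincides with the kernel of $I_p$ on all of $\mathcal V_p$: a null vector $J\in\mathbb J_p$ satisfies $I_p(J,J_2)=0$ for every $J_2\in\mathbb J_p$, and by surjectivity of $J_2\mapsto J_2(s_g(p))$ onto $T_{q_p}(\partial M)$ this forces $J'(s_g(p))-\mathcal S_{q_p}(J(s_g(p)))\in T_{q_p}(\partial M)^\perp$; the integration-by-parts formula for $I_p$ then gives $I_p(J,W)=0$ for \emph{all} $W\in\mathcal V_p$, so $J$ is a $\partial M$-Jacobi field satisfying the conditions \eqref{eq:ICond} and \eqref{eq:FCond} characterizing $\ker I_p$, and conversely every such field lies in $\mathbb J_p$ and is null there. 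As a positive conformal factor does not change the null space, the Hessian of $s_g$ at $p$ is nondegenerate if and only if $I_p$ is nondegenerate on $\mathcal V_p$, i.e.\ if and only if $\gamma_p$ is a nondegenerate OGC. The step requiring the most care is the isomorphism $w\mapsto J_w$ of the second paragraph: it is here that the non-focal hypothesis on $q_p$ enters essentially, guaranteeing both that $\mathbb J_p$ has the expected dimension and that the evaluation maps used in the collapse of $I_p$ are the appropriate isomorphisms.
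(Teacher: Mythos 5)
Your proposal is correct and follows essentially the same route as the paper's proof: you realize $\tfrac12 s_g^2$ as the pull-back of the geodesic action functional along $p\mapsto\sigma_p$, use that $\gamma_p$ is a critical point of the \emph{full} functional to reduce the Hessian to the restriction of the index form $I_p$ to $\mathbb J_p$, obtain \eqref{eq:secdersg2} by partial integration via the Jacobi equation and \eqref{eq:ICond}, and prove (a) by noting that non-focality makes $J\mapsto J\big(s_g(p)\big)$ surjective from $\mathbb J_p$ onto $T_{q_p}(\partial M)$, so the null space of \eqref{eq:secdersg2} coincides with $\ker I_p$. You merely supply details the paper leaves implicit (the chain-rule argument at a critical point of the outer functional, and the dimension count showing $w\mapsto J_w$ is an isomorphism $T_p(\partial M)\to\mathbb J_p$, where regularity of $g$ rules out the field $t\,\dot\gamma_p(t)$); note also that your derived far-end kernel condition $J'\big(s_g(p)\big)-\mathcal S_{q_p}\big(J(s_g(p))\big)\in T_{q_p}(\partial M)^\perp$ is the one consistent with \eqref{eq:indexform} and \eqref{eq:secdersg2}, despite the opposite sign printed in \eqref{eq:FCond}.
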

\begin{proof}
As in Proposition~\ref{thm:critpointssg}, both statements are obtained by identifying the points $p$ of $\partial M$ with the curve $\gamma_p$, as an element of the space of curves with endpoints in $\partial M$.
Using this identification, the tangent space $T_p(\partial M)$ is identified with the space of variations of $\gamma_p$ by geodesics $\gamma_q$ that start orthogonally to $\partial M$ and arrive onto $\partial M$. This space of variations is given by the vector space $\mathbb J_p$.
The function $\frac12s_g^2$ is the restriction to the set $\{\gamma_p:p\in\partial M\}$ of the geodesic action functional of $M$. We have proved that, under our assumptions, $\gamma_p$ is a critical point of this restriction, but also a critical point of the \emph{full} geodesic action functional. Hence, the second derivative of $\frac12s_g^2$ at $p$ is given by the restriction of the index form $I_p$ to the space $\mathbb J_p$; this implies, in particular, that the Morse index of $p$ is less than or equal to the Morse index of the OGC $\gamma_p$.
Formula \eqref{eq:secdersg2} is obtained easily
using partial integration in \eqref{eq:indexform}, the Jacobi equation and \eqref{eq:ICond}. This proves (b).

The assumption that $q_p$ is not $\partial M$-focal along $\gamma_p$ implies that, as $J_2$ runs in $\mathbb J_p$, the vector $J_2(b)$
is an arbitrary vector in $T_{q_p}(\partial M)$. Hence, $I_p(J_1,J_2)=0$ for all $J_2$ if and only if $J_2$ satisfies \eqref{eq:FCond}, i.e.,
the kernel of the second derivative of $s_g^2$ at the critical point $p\in\partial M$ coincides with the kernel of the index form $I_p$. This proves statement (a).
\end{proof}
Using the Morse index theorem for geodesics between two fixed submanifold, see for instance \cite{PicTau99}, one proves the following
more precise result on the Morse index of critical points of $s_g$:
\begin{cor}\label{thm:MIT}
Under the assumption of Proposition~\ref{thm:secondorder}, the Morse index of $\frac12s_g^2$ at $p$ is equal to the Morse index of the OGC $\gamma_p$ minus the number of $\partial M$-focal points along $\gamma_p$, counted with multiplicity.
\end{cor}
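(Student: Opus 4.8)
The plan is to obtain the index of the OGC $\gamma_p$ — the index of the index form $I_p$ on the whole space $\mathcal V_p$ — by splitting $\mathcal V_p$ into two $I_p$-orthogonal pieces and adding the indices. Write $b=s_g(p)$. The first piece is the finite-dimensional space $\mathbb J_p$ of $\partial M$-Jacobi fields with $J(b)\in T_{q_p}(\partial M)$, on which, by Proposition~\ref{thm:secondorder}(b), $I_p$ realizes the Hessian of $\tfrac12 s_g^2$; the second piece is the space of variations with fixed final endpoint, on which $I_p$ becomes the index form of an auxiliary submanifold-to-point problem. The starting computation is an integration by parts in \eqref{eq:indexform}: for $J\in\mathbb J_p$ and an arbitrary $W\in\mathcal V_p$, the Jacobi equation turns the interior term into boundary terms, and the initial condition \eqref{eq:ICond} forces the terms at $t=0$ to cancel against the shape-operator term $g(\mathcal S_p J(0),W(0))$, leaving
\[I_p(J,W)=g\Big(J'(b)-\mathcal S_{q_p}\big(J(b)\big),\,W(b)\Big),\]
which is exactly the form \eqref{eq:secdersg2}.

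Set $\mathcal V_p^0=\{W\in\mathcal V_p: W(b)=0\}$. The displayed identity shows at once that $I_p(J,W)=0$ for all $J\in\mathbb J_p$ and $W\in\mathcal V_p^0$, so $\mathbb J_p$ and $\mathcal V_p^0$ are $I_p$-orthogonal. Here is where the non-focal hypothesis on $q_p$ enters: it makes the evaluation $\mathbb J_p\ni J\mapsto J(b)\in T_{q_p}(\partial M)$ an isomorphism — injectivity is precisely the absence of a $\partial M$-Jacobi field vanishing at $b$, and surjectivity follows from $\mathcal E_p\cap T_{q_p}(\partial M)=T_{q_p}(\partial M)$. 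Consequently each $V\in\mathcal V_p$ decomposes uniquely as $V=J+(V-J)$, where $J\in\mathbb J_p$ is the Jacobi field with $J(b)=V(b)$ and $V-J\in\mathcal V_p^0$; combined with $\mathbb J_p\cap\mathcal V_p^0=\{0\}$ this gives the $I_p$-orthogonal direct sum $\mathcal V_p=\mathbb J_p\oplus\mathcal V_p^0$. Since $I_p$ is then block-diagonal, its index is additive, so the Morse index of $\gamma_p$ equals the index of $I_p|_{\mathbb J_p}$ plus the index of $I_p|_{\mathcal V_p^0}$.

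By Proposition~\ref{thm:secondorder}(b) the first summand is exactly the Morse index of $\tfrac12 s_g^2$ at $p$. For the second, I would note that on $\mathcal V_p^0$ the endpoint boundary term at $q_p$ in \eqref{eq:indexform} disappears, so $I_p|_{\mathcal V_p^0}$ is the index form of the fixed-endpoint problem for geodesics leaving $\partial M$ orthogonally and arriving at the fixed point $q_p$. The Morse index theorem for geodesics with one variable endpoint on a submanifold, in the formulation of \cite{PicTau99}, then identifies the index of $I_p|_{\mathcal V_p^0}$ with the number of $\partial M$-focal points along $\gamma_p$ in the open interval $\left]0,b\right[$, counted with multiplicity; the instant $t=b$ contributes nothing precisely because $q_p$ is non-focal. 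Rearranging the additivity relation yields the claimed identity.

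The main obstacle is the clean invocation of the index theorem on $\mathcal V_p^0$. One must check that the boundary conditions are set up correctly — free initial endpoint on $\partial M$, encoded through the first line of \eqref{eq:ICond} and the surviving term $-g(\mathcal S_p V(0),W(0))$, together with the fixed final endpoint $q_p$ — so that the conjugate instants counted by the theorem are exactly the $\partial M$-focal points, and that the non-focality of $q_p$ excludes any boundary contribution at $t=b$. By contrast, the additivity of the index across the $I_p$-orthogonal splitting of the infinite-dimensional $\mathcal V_p$ is routine once the decomposition is in place, since a maximal negative subspace can be taken to respect the two blocks.
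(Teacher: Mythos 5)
Your proof is correct and follows essentially the same route as the paper: the identical $I_p$-orthogonal splitting of $\mathcal V_p$ into $\mathbb J_p$ and the fixed-final-endpoint fields, additivity of the index across the splitting, Proposition~\ref{thm:secondorder}(b) for the $\mathbb J_p$-block, and the Morse index theorem for the submanifold-to-point problem (\cite[Theorem~2.5]{PicTau99}) for the other block. The only difference is that you prove the decomposition in-house --- via the boundary-term identity from integration by parts and the non-focality isomorphism $\mathbb J_p\ni J\mapsto J\big(s_g(p)\big)\in T_{q_p}(\partial M)$ --- where the paper simply cites \cite[Theorem~2.7]{PicTau99}; if anything your formulation is slightly more careful, since the paper's description of the complementary space $\mathcal V_p^1$ omits the (clearly intended) condition $V\big(s_g(p)\big)=0$.
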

\begin{proof}
It follows readily from Proposition~\ref{thm:secondorder} and the result of \cite[Theorem~2.7]{PicTau99}.
More precisely, \cite[Theorem~2.7]{PicTau99} proves that, when $q_p$ is not $\partial M$-focal along $\gamma_p$, then the space
$\mathcal V_p$ (recall that this is the domain of the index form $I_p$) is the direct sum of:
\begin{itemize}
\item the space $\mathcal V_p^1$ of (piecewise smooth) vector fields $V$ along $\gamma_p$ satisfying $V(0)\in T_p(\partial M)$,
\item the space $\mathbb J_p$ of $(\partial M)$-Jacobi fields $J$ along $\gamma_p$ satisfying $J\big(s_g(p)\big)\in T_{q_p}(\partial M)$.
\end{itemize}
Such a direct sum decomposition is $I_p$-orthogonal. Hence, the index of $I_p$ on $\mathcal V_p$, which is the Morse index of the OGC $\gamma_p$, is equal to the sum of the indices of the restriction of $I_p$ to each one of the two spaces above. The index of the restriction of $I_p$ to $\mathbb J_p$ is precisely the index of the bilinear form \eqref{eq:secdersg2}, i.e., the Morse index of $\frac12s_g^2$ at $p$.
The index of the restriction of $I_p$ to the space $\mathcal V_p^1$, by the Morse index theorem for geodesics between a submanifold and a fixed point
(see \cite[Theorem~2.5]{PicTau99}), is given by the number of $\partial M$-focal points along $\gamma_p$ counted with multiplicity.
\end{proof}
\begin{cor}\label{thm:Morse+equaindex}
Let $M$ be a compact manifold with connected boundary $\partial M$, and let $g$ be a regular and non-focal Riemannian metric on $M$.
If every OCG in $M$ is nondegenerate, then $s_g$ is an even-Morse function on $\partial M$.
\end{cor}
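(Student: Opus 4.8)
The plan is to verify the two requirements in the definition of an even-Morse function — that $s_g$ is Morse, and that for each index $k$ the number of critical points of index $k$ is even — both via the correspondence between critical points of $s_g$ and orthogonal geodesic chords. For the Morse property, observe that since $g$ is non-focal every critical point $p$ of $s_g$ has $q_p$ non-focal, so by Proposition~\ref{thm:critpointssg} the curve $\gamma_p$ is an OGC and Proposition~\ref{thm:secondorder}(a) applies, telling us that $p$ is a nondegenerate critical point of $s_g$ if and only if $\gamma_p$ is a nondegenerate OGC. The hypothesis that every OGC is nondegenerate then forces all critical points of $s_g$ to be nondegenerate, i.e. $s_g$ is a Morse function.

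For the parity of the index counts I would use the endpoint involution. By Corollary~\ref{thm:pcritiffqcrit} the map $p\mapsto q_p$ carries critical points to critical points and satisfies $q_{q_p}=p$, so it partitions the critical set into the unordered pairs $\{p,q_p\}$ of endpoints of a single OGC; one first has to rule out fixed points $p=q_p$ (orthogonal geodesic loops), so that every orbit genuinely has two elements. It then suffices to show that $s_g$ has the \emph{same} Morse index at $p$ and at $q_p$, for then each index class is a disjoint union of two-element orbits and is therefore even. By Corollary~\ref{thm:MIT}, the index of $\tfrac12 s_g^2$ at $p$ equals the Morse index of the OGC $\gamma_p$ minus the number $n_p$ of $\partial M$-focal points along $\gamma_p$, and similarly at $q_p$. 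Since $\gamma_{q_p}$ is $\gamma_p$ traversed backwards and the geodesic action with both endpoints free on $\partial M$ is invariant under this reversal, the two OGC indices coincide; hence the indices of $s_g$ at $p$ and $q_p$ agree precisely when $n_p=n_{q_p}$.

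The heart of the proof is therefore the identity $n_p=n_{q_p}$, and this is where I expect the main difficulty to lie. The idea is that the function $p\mapsto N(p)$, counting with multiplicity the $\partial M$-focal points along $\gamma_p$ in the open interval $\left]0,s_g(p)\right[$, is defined for \emph{every} $p\in\partial M$ and is locally constant: no focal point can escape through $t=0$ (where $\mathrm d\exp^\perp$ is nonsingular), and the non-focal hypothesis forbids a focal point from sitting at the crossing time $t=s_g(p)$, so the count cannot change as $p$ varies. The delicate point is to exclude interior bifurcations of focal points as $p$ moves, and this is exactly the step for which the stability result for focal points of \cite{MerPicTau02} is invoked. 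Granting local constancy, connectedness of $\partial M$ upgrades $N$ to a global constant, so in particular $n_p=N(p)=N(q_p)=n_{q_p}$ for every critical point $p$. Combined with the previous paragraph this shows that $s_g$ has equal index at the two endpoints of each OGC, and together with the Morse property established first, we conclude that $s_g$ is even-Morse.
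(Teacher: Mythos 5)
Your proposal is correct and follows essentially the same route as the paper's proof: the Morse property via Proposition~\ref{thm:secondorder}(a), the pairing of critical points via Corollary~\ref{thm:pcritiffqcrit}, and the equality of the Morse indices at $p$ and $q_p$ via Corollary~\ref{thm:MIT} combined with the constancy of the focal-point count along all the geodesics $\gamma_r$, $r\in\partial M$, obtained exactly as you describe from the stability result of \cite{MerPicTau02}, the non-focality assumption (which prevents focal points from crossing the endpoint $t=s_g(r)$), and the connectedness of $\partial M$. The one step you flag but do not close --- excluding fixed points $p=q_p$ of the endpoint involution --- is immediate (and left implicit in the paper): if $q_p=p$ then the arrival velocity of the OGC $\gamma_p$ is $-\vec\nu_p$, so by uniqueness of geodesics $\gamma_p$ coincides with its own backward reparameterization, i.e.\ $\gamma_p(s_g(p)-t)=\gamma_p(t)$, which forces $\dot\gamma_p\big(s_g(p)/2\big)=0$ and contradicts the fact that $\gamma_p$ has unit speed.
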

\begin{figure}
\begin{center}
\psfull \epsfig{file=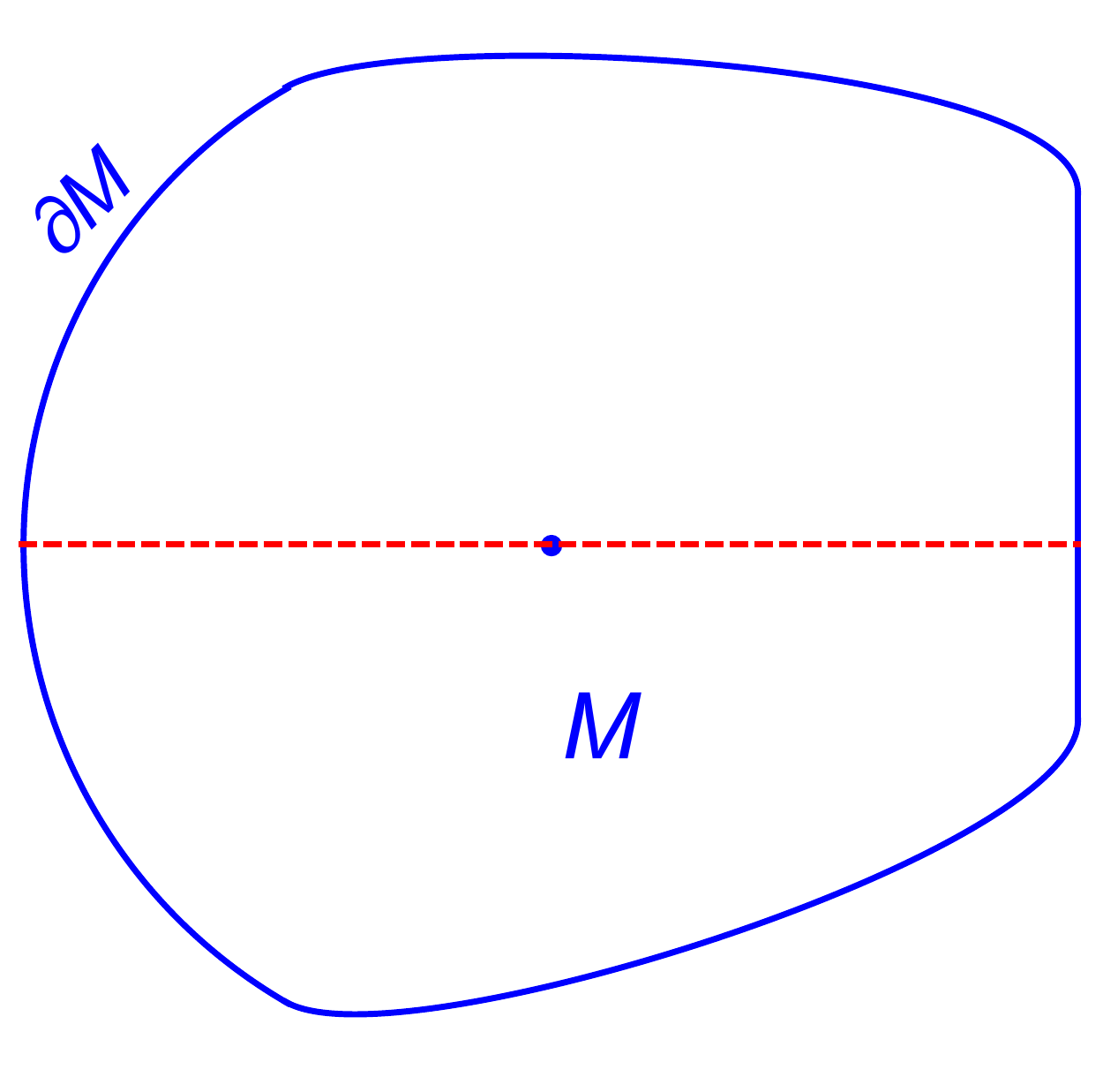, height=5cm} \caption{The picture illustrates an example of a non-degenerate OGC which has one $\partial M$-focal point, but its backward reparameterization has no $\partial M$-focal point. Here $M$ is a compact subset of a Euclidean space. The boundary of $M$ has a portion which is a spherical surface. An orthogonal geodesic chord departs from this surface, and it passes through the center of the sphere, which is a focal point. At the final endpoint, the geodesic meets orthogonally a portion of $\partial M$ which is planar (i.e., flat). Thus, there are no focal points along the backward reparameterization of the geodesic. Note that this phenomenon only occurs when the metric fails to be non-focal, so that the picture shows also that the non-focal property is \emph{not} generic. Namely,  any sufficiently small perturbation of the metric preserves the  existence of a non-degenerate OGC, which has different Morse indexes when run in the two directions, by the stability of the Morse index. Therefore, any perturbed metric fails to be non-focal.}
\label{fig:2}
\end{center}
\end{figure}
\begin{proof}
From Proposion~\ref{thm:secondorder},  part (a), it follows that if every OGC is nondegenerate, then also every critical point of $s_g$ is  nondegenerate, i.e., $s_g$ is a Morse function. Let us show that, given a critical point $p$ of $s_g$, then $q_p$ is a critical point with the same Morse index.
From Corollary~\ref{thm:MIT}, the difference between the Morse indices of $p$ and $q_p$ equals the difference of the number of $\partial M$-focal points along $\gamma_p$ and along $\gamma_{q_p}$ (which incidentally is the same geodesic\footnote{\label{foo:4}Observe that the fact that $\gamma_{q_p}$ is the same as $\gamma_p$ with the opposite orientation does not imply in principle that the number of $\partial M$-focal points along the two geodesics is the same, see Figure \ref{fig:2}.}
 as $\gamma_p$, with the opposite orientation, see Corollary~\ref{thm:pcritiffqcrit}). We claim that the number of $\partial M$-focal points along any geodesic $\gamma_r:[0,s_g(r)]\to M$ is constant for al $r\in\partial M$. This follows easily from the assumption that no $\gamma_r$ has any $\partial M$-focal point in $M$, and an argument of stability of the number of focal points in Riemannian geometry, see \cite{MerPicTau02} for details. Namely, as $r$ runs in $\partial M$, the number of focal points (counted with multiplicity) along $\gamma_r$ changes continuously, and by connectedness, if such number is not constant in $\partial M$, then there would be a $\partial M$-focal point in $\partial M$ along some $\gamma_r$. This concludes the proof.
\end{proof}
\begin{rem}  It is not hard to show that in all the results of this section, the assumptions cannot be relaxed.
As we observed in Remark~\ref{rem:counterex1}, Figure \ref{fig:01} shows an example of a metric which is not non-focal, and where the critical points of $s_g$ do not correspond to OGC's (cf.\ Proposition~\ref{thm:critpointssg} and Corollary~\ref{thm:pcritiffqcrit}). Figure \ref{fig:2} illustrates an example where the function $s_g$ is a Morse function, but not even-Morse (cf. Corollary~\ref{thm:Morse+equaindex}). Figure \ref{fig:3} gives an example of a 
regular and non-focal metric on a manifold of arbitrary dimension, whose boundary is not connected, and admitting only $2$ OGC's.
\end{rem}
\begin{figure}
\begin{center}
\psfull \epsfig{file=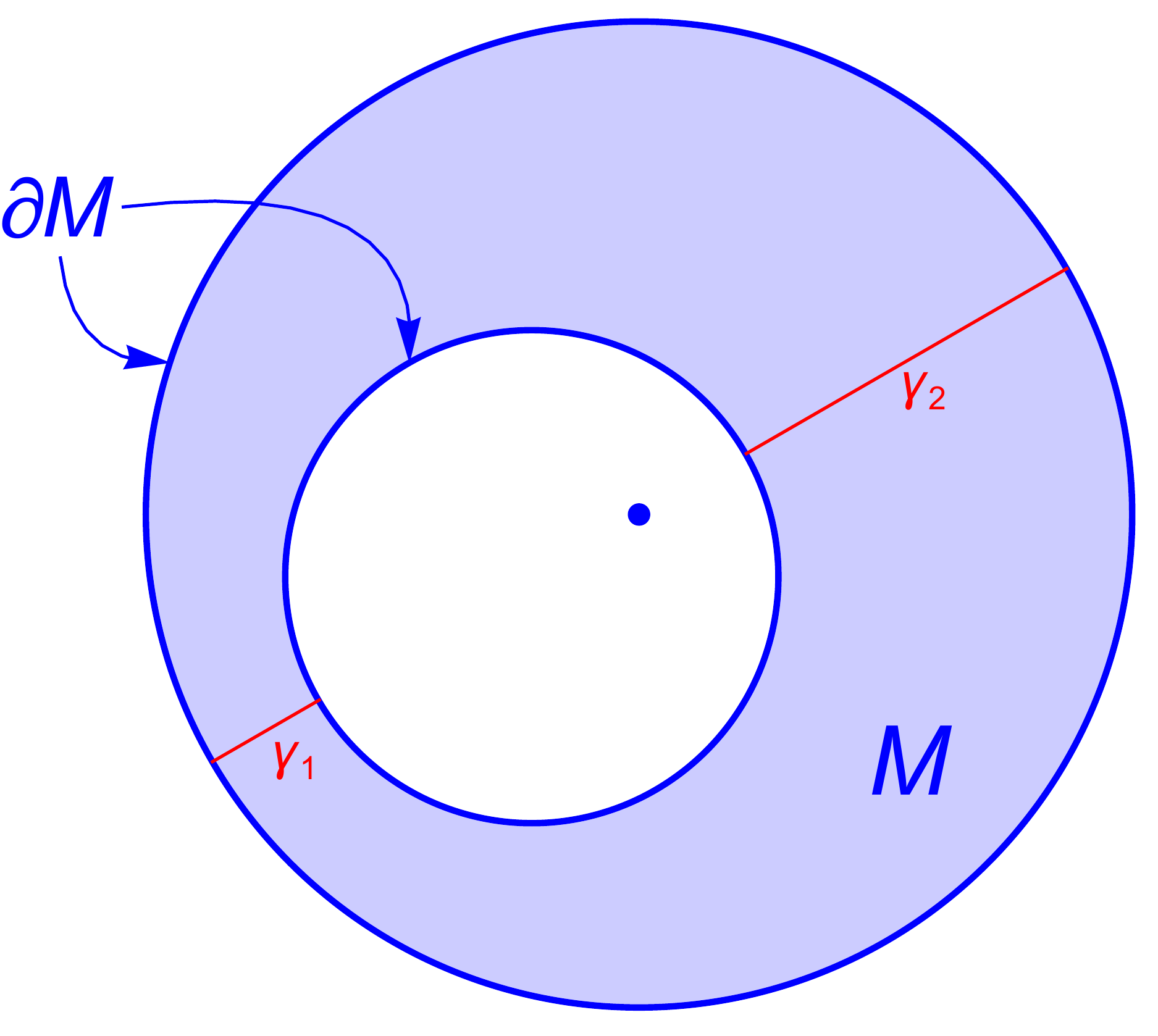, height=5cm} \caption{When the boundary of $M$ is not connected, one cannot expect the existence of more than $2$ orthogonal geodesic chords, regardless of the dimension of $M$. Consider for instance an annular region of $\mathds R^{m+1}$ bounded by two non-concentrical spheres. In this situation, the metric is regular and, if the inner sphere contains the center of the outer sphere in its interior, then the metric is also non-focal. Thus, $s_g$ is an 
even-Morse function on a non-connected manifold. However, there are only two orthogonal geodesic chords, and thus $s_g$ admits only two pairs of critical points with the same Morse index. Observe that in Remark~\ref{rem:intro} on page~\pageref{rem:intro} only connected manifolds were considered.}
\label{fig:3}
\end{center}
\end{figure}
\begin{prop}\label{thm:OGCsphere}
Under the assumptions of Corollary~\ref{thm:Morse+equaindex}, there are at least $(m+1)$ distinct orthogonal geodesic chords in $(B^{m+1},g)$.
\end{prop}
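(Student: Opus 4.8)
The plan is to read off the $(m+1)$ chords from the critical points of the crossing time function $s_g$, using Theorem~A together with the correspondence between critical points of $s_g$ and OGC's developed in this section. First I would observe that the boundary of $B^{m+1}$ is the sphere $\mathbb S^m$, which is connected and homeomorphic to the $m$-sphere; hence Corollary~\ref{thm:Morse+equaindex} applies and shows that $s_g:\mathbb S^m\to\left]0,+\infty\right[$ is an even-Morse function. Theorem~A then guarantees that $s_g$ has a critical point of every Morse index $k\in\{0,1,\ldots,m\}$, and I would fix one such point $p_k$ for each $k$. Since $g$ is non-focal, Proposition~\ref{thm:critpointssg} ensures that each $\gamma_{p_k}$ is an orthogonal geodesic chord, so that we are handed $m+1$ OGC's $\gamma_{p_0},\ldots,\gamma_{p_m}$.

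The real content is to show that these chords are pairwise distinct, and here care is needed: a single OGC determines \emph{two} critical points of $s_g$, namely its two endpoints $p$ and $q_p$, which are interchanged by the involution of Corollary~\ref{thm:pcritiffqcrit}. Thus associating one chord to each critical point would double count, and a crude count would only guarantee roughly half as many chords. The device that rescues the full count is the fact, established within the proof of Corollary~\ref{thm:Morse+equaindex}, that the involution $p\mapsto q_p$ preserves the Morse index. Consequently each OGC carries a well-defined index, namely the common Morse index of its two endpoints viewed as critical points of $s_g$.

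Granting this, distinctness is immediate. If $\gamma_{p_k}$ and $\gamma_{p_{k'}}$ were the same chord up to orientation for some $k\ne k'$, then by Corollary~\ref{thm:pcritiffqcrit} their unordered endpoint sets $\{p_k,q_{p_k}\}$ and $\{p_{k'},q_{p_{k'}}\}$ would agree; but both elements of the first set have Morse index $k$, whereas $p_{k'}$ has index $k'\ne k$, a contradiction. Hence $\gamma_{p_0},\ldots,\gamma_{p_m}$ are $m+1$ distinct orthogonal geodesic chords, which is the assertion.

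I expect the only genuine obstacle to be this bookkeeping between critical points and chords: the existence of enough critical points is delivered directly by Theorem~A, and the substance of the argument lies in verifying that critical points of different indices can never be the two endpoints of one and the same OGC, so that the estimate $m+1$ is not secretly halved. Everything else reduces to citing the results already proved in this section.
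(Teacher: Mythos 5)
Your proof is correct and follows exactly the route the paper intends: its own proof consists of the single line ``It follows immediately from Theorem~A and Corollary~\ref{thm:Morse+equaindex}.'' The bookkeeping you spell out---that each OGC accounts for the \emph{pair} of critical points $\{p,q_p\}$ of Corollary~\ref{thm:pcritiffqcrit}, and that the index-preservation established in the proof of Corollary~\ref{thm:Morse+equaindex} prevents the count from being halved, since chords of distinct indices cannot share endpoints---is precisely the implicit content behind the word ``immediately,'' so you have simply made the paper's argument explicit.
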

\begin{proof}
It follows immediately from Theorem~A and Corollary~\ref{thm:Morse+equaindex}.
\end{proof}
In next section we will discuss the assumptions of the results proved above, and we will prove the genericity of the nondegeneracy condition for OGC's.
\end{section}

\begin{section}{Proof of Theorem~B. A discussion of the assumptions.}
\subsection{On the genericity of metrics without degenerate OGC's}
Recall that a metric on a compact Riemannian manifold is called \emph{bumpy} if every one of its closed geodesic (possibly iterated)
is nondegenerate, i.e., it does not admit non-trivial periodic Jacobi fields. A classical result due to Anosov \cite{Ano82} asserts that bumpy metric are generic. Genericity of metrics all of whose geodesics satisfying more general boundary condition are nondegenerate is proved in \cite{BetGia2010}.

Let us introduce the following terminology. Given a compact manifold $M$ with boundary, a metric $g\in\mathrm{Reg}_*(M)$ will be called
\emph{$\partial$-bumpy} if every OGC in $M$ is nondegenerate.

The following is what we need for the conclusion of Theorem~B:
\begin{prop}\label{thm:maingenericity}
Let $M$ be a compact manifold with boundary. The set:
\begin{equation}\label{eq:Reg_generic}
\widetilde{\mathrm{Reg}}(M):=\Big\{g\in\mathrm{Reg}_*(M):\text{$g$ is $\partial$-bumpy}\Big\}
\end{equation}
is $C^k$-generic in $\mathrm{Reg}_*(M)$ for all $k=2,\ldots,+\infty$.
\end{prop}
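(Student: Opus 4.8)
The plan is to realize $\partial$-bumpiness as a transversality condition and then invoke an abstract genericity (Sard--Smale type) theorem. The key geometric fact, already established above, is that OGC's in $M$ are in bijective correspondence with critical points of the crossing time function $s_g$ on $\partial M$, and that an OGC $\gamma_p$ is nondegenerate precisely when the corresponding critical point of $s_g$ is nondegenerate (Proposition~\ref{thm:secondorder}(a), assuming the non-focal property so that $q_p$ is never focal). Thus ``$g$ is $\partial$-bumpy'' is equivalent to ``$s_g$ is a Morse function on $\partial M$.'' The strategy is therefore to study the dependence $g\mapsto s_g$ and show that, for a generic metric, the resulting function on $\partial M$ has only nondegenerate critical points.

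First I would fix the ambient structure. For each $k\in\{2,\ldots,\infty\}$ one works in the Banach (or Fr\'echet, for $k=\infty$) manifold of $C^k$ metrics, and restricts to the open subset $\mathrm{Reg}_*(M)$, which is open in the $C^2$-topology by Proposition~\ref{thm:nofocalpoints} and the openness of regularity in the $C^1$-topology noted in the introduction. The regularity hypothesis guarantees that the crossing time function $s_g\colon\partial M\to\,\left]0,+\infty\right[$ is well defined and smooth, and that it depends smoothly (in the appropriate operator sense) on $g$. I would then assemble the universal map
\[
\Phi\colon \mathrm{Reg}_*(M)\times\partial M\longrightarrow T^*(\partial M),\qquad
\Phi(g,p)=\mathrm ds_g(p),
\]
so that the critical points of $s_g$ are exactly the points $p$ with $\Phi(g,p)$ in the zero section. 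The nondegeneracy of a critical point is the statement that the fibre derivative $\partial_p\Phi(g,p)$ — which is the Hessian of $s_g$, computed explicitly via formula \eqref{eq:secdersg2} in Proposition~\ref{thm:secondorder}(b) — is invertible. Hence $s_g$ is Morse if and only if $\Phi(g,\cdot)$ is transverse to the zero section.

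The heart of the argument is then the infinite-dimensional transversality step: I would prove that the full map $\Phi$ is transverse to the zero section of $T^*(\partial M)$, by showing that for a fixed critical point $p$ of $s_g$ one can realize any prescribed first-order variation of $\mathrm ds_g(p)$ through a suitable compactly supported perturbation of the metric along the geodesic $\gamma_p$. Concretely, using the first-variation formula \eqref{eq:firstvarsg} one computes how $\mathrm ds_g(p)$ changes under a variation $g_\epsilon=g+\epsilon h$, and one must exhibit, for each target cotangent direction, a perturbation $h$ supported away from $\partial M$ (to stay within $\mathrm{Reg}_*(M)$ and not disturb boundary data) producing that change. Once this surjectivity of the ``universal'' derivative is in hand, the abstract Sard--Smale theorem applies: the projection $\pi\colon\Phi^{-1}(\text{zero section})\to\mathrm{Reg}_*(M)$ is Fredholm of index $0$, its critical values form a meagre set, and the regular values are exactly the metrics for which $\Phi(g,\cdot)\pitchfork 0$, i.e.\ the $\partial$-bumpy metrics. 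Since $\partial M$ is compact, this yields that $\widetilde{\mathrm{Reg}}(M)$ is residual, hence generic, in each $C^k$-topology.

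The main obstacle I expect is precisely the surjectivity of the parameter derivative in the transversality step: one must verify that metric perturbations are rich enough to move the Hessian \eqref{eq:secdersg2} off its degenerate kernel, i.e.\ that for any nonzero $\partial M$-Jacobi field $J$ in the kernel one can find an admissible $h$ changing the index form $I_p(J,J)$ in a controlled way while respecting the non-focal and regularity constraints that define $\mathrm{Reg}_*(M)$. This is delicate because the perturbation must not create $\partial M$-focal points (which would push $g$ out of the non-focal open set) and must preserve the transversal crossing that defines $s_g$; a careful localization of $h$ near an interior point of $\gamma_p$, where curvature can be perturbed freely, together with the standard fact that the curvature term in \eqref{eq:indexform} can be adjusted independently of the boundary shape operators, should suffice. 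The $C^\infty$ case requires the usual additional care, passing to the limit over the $C^k$ statements or invoking a Fr\'echet version of Sard--Smale.
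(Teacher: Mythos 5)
Your route is genuinely different from the paper's, which disposes of Proposition~\ref{thm:maingenericity} in a few lines by citation: it first observes that it suffices to prove genericity of $\widetilde{\mathrm{Reg}}(M)$ in the space of \emph{all} metrics (since $\mathrm{Reg}_*(M)$ is open, a residual subset of the whole space meets it in a residual subset), and then invokes the general genericity theorem of \cite[Theorem~5.10]{BetGia2010} for geodesics with arbitrary boundary conditions, the only hypothesis to verify being the a priori exclusion of \emph{strongly degenerate} periodic geodesics --- trivial here, because the class of OGC's contains no periodic geodesic at all. Your plan instead re-proves the relevant special case of that theorem from scratch, by parametric transversality applied to the finite-dimensional reduction $\Phi(g,p)=\mathrm ds_g(p)$. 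This has real appeal: the problem is pushed onto the compact $m$-manifold $\partial M$, so the projection from the universal zero set is automatically Fredholm of index $0$, and the classical obstruction to naive transversality in bumpy-metric theorems (iterates of closed geodesics, the source of the strong-degeneracy caveat in \cite{BetGia2010}) is structurally absent, since an OGC cannot be an iterate. Note also that two of the difficulties you flag are vacuous: since $\mathrm{Reg}_*(M)$ is $C^2$-open, any sufficiently small perturbation $h$ automatically preserves non-focality and the transversal crossing defining $s_g$, so no engineering of $h$ is needed for this --- this openness is exactly the reduction the paper makes in its first sentence. (A cosmetic point: the correspondence between OGC's and critical points of $s_g$ is two-to-one, each chord yielding the pair $p$, $q_p$ of Corollary~\ref{thm:pcritiffqcrit}, not a bijection; this is harmless in the universal-moduli formulation, where Sard--Smale handles each zero of $\Phi$ separately.)

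The genuine gap is at the declared heart of your argument: the surjectivity of the parameter derivative is asserted (``should suffice''), not proved, and it is precisely the nontrivial content that the paper outsources to \cite{BetGia2010}. To close it you would have to compute $\delta_h\,\mathrm d\big(s_g^2\big)(p)$ at a critical point with degenerate Hessian --- a computation involving not only $h$ along $\gamma_p$ but also the $h$-induced variation of the nearby normal geodesics $\gamma_q$ entering the definition of $s_g$ --- and then exhibit, for each nonzero $J$ in the kernel of \eqref{eq:secdersg2}, a perturbation $h$ supported near an interior point $t_0$ with $J(t_0)\neq0$ making the relevant pairing nonzero. En route one must check, for instance, that kernel elements are never tangential (true: a field $J=a\,t\,\dot\gamma_p$ satisfying \eqref{eq:ICond} has $J\big(s_g(p)\big)$ proportional to $\dot\gamma_p\big(s_g(p)\big)$, which is orthogonal to $T_{q_p}(\partial M)$ at an OGC, forcing $a=0$), so that transverse perturbation directions actually exist along $\gamma_p$; and one must handle possible self-intersections of $\gamma_p$ near $t_0$, as well as supply the Fr\'echet/Taubes argument for $k=\infty$. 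None of these steps fails, but as written your text is a program rather than a proof: carried out in full it would amount to re-deriving the case of \cite[Theorem~5.10]{BetGia2010} that the paper simply cites, so the honest choices are either to do that perturbation analysis explicitly or to quote the reference as the paper does.
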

\begin{proof}
It suffices to show that $\widetilde{\mathrm{Reg}}(M)$ is $C^k$-generic in the set of all metrics on $M$.
This is an application of the result in \cite{BetGia2010}. In order to obtain the desired conclusion, one must observe that the proof of \cite[Theorem~5.10]{BetGia2010} yields the genericity of metrics having only nondegenerate geodesics satisfying arbitrary boundary conditions, provided that one can exclude \textsl{a priori} the existence of \emph{strongly degenerate}\footnote{The notion of strong degeneracy for a periodic geodesic has been introduced in \cite{BetGia2010}, and it is stated in terms of the existence of a non-trivial periodic Jacobi field along an iterated periodic geodesic, satisfying some additional properties. A detailed discussion of this notion is irrelevant in the context of the present paper.} periodic geodesics in this class. This is always the case for orthogonal geodesic chords, because this class does not contain any periodic geodesic at all.
\end{proof}
\subsection{${\pmb\partial}{\pmb M}$-focal points and non-focal metrics}
\label{sub:3.2}
Unlike bumpy metrics, the set of non-focal Riemannian metrics on a compact manifold with boundary $\partial M$
is not generic in general.
\begin{example}
For instance, one can construct examples of hypersurfaces $\Sigma$ in $\mathds R^n$, diffeomorphic to an $(n-1)$-disk, whose set of focal points contains another hypersurface $\mathcal F$. Now, consider the closure $\overline\Omega$ of a bounded open subset whose smooth boundary $\partial\Omega$ contains $\Sigma$, and such that $\partial\Omega$ intercepts $\mathcal F$ transversally. In this situation, transversality implies that any $C^1$-perturbation of the flat metric on $\overline\Omega$ will produce $\partial\Omega$-focal points along $\partial\Omega$.
The situation is illustrated concretely in Figure \ref{fig:2}.
The picture provides an example of a metric which fails to be non-focal,
and the same holds also for small perturbations of the metric.
\end{example}
However, the absence of $\partial M$-focal points on $\partial M$ is still a quite general assumptions, that holds in a large variety of circumstances. Intuitively, concavity of the boundary (which is a property satisfied by Jacobi metric) and non-positive curvature have an effect of keeping focal points away from $\partial M$. More precisely, we can prove that when the sectional curvature of $g$ and the eigenvalues of the shape operator of $\partial M$ are not too large compared to the \emph{size} of $M$, then there are no $\partial M$-focal points in $M$.
In order to give basis to this assertion, let us introduce the following constants.

Let $g$ be a regular metric on $M$; the maximum of the function $s_g$ on $\partial M$, denoted by $L(M,g)$ will be called the \emph{maximal crossing time} of $(M,g)$. Clearly, $L(M,g)$ is somewhat related to, however in general larger than, the diameter of $(M,g)$. Moreover, denote by $\Lambda$ the maximum, for $p\in\partial M$, of the largest eigenvalue of $\mathcal S_p$, and by $K$ the maximum value of the sectional curvatures
of all $2$-planes tangent to $M$.
\begin{prop}\label{thm:nofocalpoints}
Let $M$ be a compact manifold with smooth boundary.
\begin{itemize}
\item[(a)] The set of non-focal metrics on $M$ is open with respect to the $C^2$-topology.
\item[(b)] A sufficient condition for a metric $g$ to be non-focal is that the following inequality holds:
\begin{equation}\label{eq:suffcondnonfocality}
K^+\cdot L(M,g)^2+\Lambda^+\cdot L(M,g)-\tfrac14<0,
\end{equation}
\end{itemize}
(where $a^+$ denotes the positive part of the real numner $a$.)
Condition \eqref{eq:suffcondnonfocality} implies indeed that there is no focal point to $\partial M$ in $M$.
\end{prop}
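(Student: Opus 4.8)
The plan is to prove the two assertions separately, part~(b) being the substantial one. For~(b) I would argue by contradiction, assuming that some unit-speed normal geodesic $t\mapsto\gamma_p(t)=\exp_p(t\,\vec\nu_p)$ has a $\partial M$-focal point at a parameter $t_0\in\left]0,L(M,g)\right]$ (recall that a focal point lying in $M$ must occur at some $t\le s_g(p)\le L(M,g)$), and showing that \eqref{eq:suffcondnonfocality} forces the corresponding focal Jacobi field to vanish identically. For~(a) I would invoke continuous dependence of the focal structure on the metric together with the compactness of $\partial M$.

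A focal point at $\gamma_p(t_0)$ means that there is a nontrivial $\partial M$-Jacobi field $J$ along $\gamma_p$, i.e.\ one satisfying \eqref{eq:ICond}, with $J(t_0)=0$. If $J(0)=0$, then \eqref{eq:ICond} forces $J'(0)$ to be a multiple of $\dot\gamma_p(0)=\vec\nu_p$, so $J(t)=c\,t\,\dot\gamma_p(t)$, which is nonzero for $t>0$ unless $c=0$; hence I may assume $J(0)\ne0$. Pairing the Jacobi equation $J''+R(J,\dot\gamma_p)\dot\gamma_p=0$ with $J$ and integrating by parts on $[0,t_0]$, the boundary term at $t_0$ drops out because $J(t_0)=0$, while the boundary term at $0$ combines with \eqref{eq:ICond} to produce the shape operator, yielding
\[ \int_0^{t_0} g(J',J')\,\mathrm dt = g\big(\mathcal S_p(J(0)),J(0)\big) - \int_0^{t_0} g\big(R(\dot\gamma_p,J)\dot\gamma_p,J\big)\,\mathrm dt. \]
Using the definitions of $\Lambda$ and $K$, the eigenvalues of $\mathcal S_p$ being at most $\Lambda$ gives $g(\mathcal S_p(J(0)),J(0))\le\Lambda^+|J(0)|^2$, and the sectional curvatures being at most $K$ gives $-g(R(\dot\gamma_p,J)\dot\gamma_p,J)=g(R(J,\dot\gamma_p)\dot\gamma_p,J)\le K^+|J|^2$ (the positive parts enter precisely because non-positive curvature and a non-positive shape operator only push focal points away), so that
\[ \int_0^{t_0} g(J',J')\,\mathrm dt \le K^+\int_0^{t_0}|J|^2\,\mathrm dt + \Lambda^+\,|J(0)|^2. \]
Finally, since $J(t_0)=0$ one writes $J(t)=-\int_t^{t_0}J'$ and applies Cauchy--Schwarz to get $|J(t)|^2\le t_0\int_0^{t_0}|J'|^2$ for every $t$, whence $|J(0)|^2\le t_0\int_0^{t_0}|J'|^2$ and $\int_0^{t_0}|J|^2\le t_0^2\int_0^{t_0}|J'|^2$. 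Substituting and dividing by $\int_0^{t_0}|J'|^2$ (strictly positive, since otherwise $J$ would be parallel, hence $\equiv J(t_0)=0$) yields $1\le K^+t_0^2+\Lambda^+t_0$; as $t_0\le L(M,g)$ and $s\mapsto K^+s^2+\Lambda^+s$ is nondecreasing, this forces $K^+L(M,g)^2+\Lambda^+L(M,g)\ge1$, contradicting \eqref{eq:suffcondnonfocality}, whose left-hand side is below $\tfrac14<1$.

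For part~(a), I would observe that $\partial M$-Jacobi fields solve, along the normal geodesics, a linear second-order system whose coefficients are assembled from the curvature tensor and from the shape operator of $\partial M$, that is, from the $2$-jet of $g$; moreover $s_g$ and the normal geodesics themselves depend continuously on $g$ in the $C^2$-topology. Consequently the endpoint evaluation $p\mapsto\mathrm d\exp^\perp\big(s_g(p)\vec\nu_p\big)$, restricted to $\partial M$-Jacobi fields, depends continuously on $(p,g)$, and non-focality is exactly the assertion that it is nonsingular for every $p\in\partial M$. Since $\partial M$ is compact, nonsingularity is uniform and therefore persists under small $C^2$-perturbations; the quantitative form of this continuity, namely the stability of the number of $\partial M$-focal points counted with multiplicity, is the content of \cite{MerPicTau02}, which I would cite.

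I expect the main obstacle to be the careful sign bookkeeping in the integration by parts: one must track the curvature convention $R(X,Y)=[\nabla_X,\nabla_Y]-\nabla_{[X,Y]}$ and the sign of $\mathcal S_p$ adopted in \eqref{eq:indexform} so that the passage to $K^+$ and $\Lambda^+$ is genuinely an upper bound, and one must treat separately the degenerate tangential field arising when $J(0)=0$. A secondary point is that the constant so obtained is not optimal; comparing $|J|$ with the solution of the Riccati equation $y'=-K^+-y^2$, $y(0)=-\Lambda^+$, would give a sharper threshold, but the crude Cauchy--Schwarz bounds already establish \eqref{eq:suffcondnonfocality} with room to spare.
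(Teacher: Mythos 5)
Your proof is correct, and for part (b) it takes a route that is dual to the paper's, rather than identical to it. The paper never touches the focal Jacobi field directly: it proves that under \eqref{eq:suffcondnonfocality} the index form $I_p$ of \eqref{eq:indexform} is positive definite on the space $\mathcal U_p$ of test fields with $V(0)\in T_p(\partial M)$, $V\big(s_g(p)\big)=0$, $g(V,\dot\gamma_p)\equiv0$, leaving implicit the standard fact that such positivity precludes focal points (a focal Jacobi field extended by zero would be a null vector of $I_p$ on $\mathcal U_p$). You instead assume a nontrivial $\partial M$-Jacobi field $J$ with $J(t_0)=0$ and contradict its existence by integrating the Jacobi equation against $J$ on $[0,t_0]$; this makes the proof self-contained on exactly the point the paper glosses over, and your handling of the degenerate case $J(0)=0$ (where codimension one forces $J=c\,t\,\dot\gamma_p$) is the right and necessary check. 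The quantitative core is the same in both arguments --- a Poincar\'e-type bound for fields vanishing at one endpoint, plus the boundary term controlled by $\Lambda^+$ and the curvature term by $K^+$ --- but your pointwise Cauchy--Schwarz bounds $|J(t)|^2\le t_0\int_0^{t_0}|J'|^2$ and $\int_0^{t_0}|J|^2\le t_0^2\int_0^{t_0}|J'|^2$ carry better constants than the paper's chain \eqref{eq:F2}--\eqref{eq:F3} (which lose factors of $4$), so you in fact prove non-focality already under $K^+L(M,g)^2+\Lambda^+L(M,g)<1$, establishing the stated threshold $\tfrac14$ a fortiori, as you note. Your sign bookkeeping is consistent with the paper's conventions ($-g(J'(0),J(0))=g(\mathcal S_p J(0),J(0))$ from \eqref{eq:ICond}, and the Jacobi equation $J''+R(J,\dot\gamma_p)\dot\gamma_p=0$ matching the convention stated after \eqref{eq:indexform}); one cosmetic simplification you missed is that $g(J,\dot\gamma_p)$ is affine and vanishes at both $t=0$ and $t=t_0$, hence a focal Jacobi field is automatically orthogonal to $\dot\gamma_p$, though your estimate with $K^+$ is valid even without this. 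For part (a) your argument coincides with the paper's: both reduce openness to the stability of focal points under $C^2$-perturbations proved in \cite{MerPicTau02}, combined with compactness of $\partial M$.
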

\begin{proof}
The set of focal points to a given compact hypersurface along a normal geodesic is stable by $C^2$-perturbations of all data. A formal proof of this fact can be found in reference~\cite{MerPicTau02}. From this stability, (a) follows easily.

For the proof of (b), let us show that, under the assumption \eqref{eq:suffcondnonfocality}, given any $p\in\partial M$, the index form $I_p$ is positive definite on the space on the vector space $\mathcal U_p$ of (piecewise smooth) vector fields $V$ along $\gamma_p$ satisfying $V(0)\in T_p(\partial M)$, $V\big(s_g(p)\big)=0$, and $g(V,\dot\gamma_p)\equiv0$.
Recall that $\gamma_p:\big[0,s_g(p)\big]\to M$ is a unit speed geodesic.

Let $V\in\mathcal U_p\setminus\{0\}$ be fixed; up to normalization, we can assume:
\begin{equation}\label{eq:normalization}
\int_0^{s_g(p)}g(V,V)\,\mathrm ds=1.
\end{equation}
Since $V\big(s_g(p)\big)=0$, for all $t\in\big[0,s_g(p)\big]$ we have:
\[-g\big(V(t),V(t)\big)=2\int_t^{s_g(p)}g(V',V)\,\mathrm ds,\]
which gives
\begin{equation}\label{eq:F1}
g\big(V(t),V(t)\big)\le2\int_0^{s_g(p)}g(V',V')^\frac12g(V,V)^\frac12\,\mathrm ds.
\end{equation}
Integrating \eqref{eq:F1} on $[0,s_g(p)]$ and keeping in mind \eqref{eq:normalization}, Schwarz's inequality gives:
\begin{equation}\label{eq:F2}
1=\int_0^{s_g(p)}g(V,V)\,\mathrm ds\le 4s_g(p)^2\int_0^{s_g(p)}g(V',V')\,\mathrm ds.
\end{equation}
Setting $t=0$ in \eqref{eq:F1} and using again Schwarz's inequality, we obtain:
\begin{equation}\label{eq:F3}
g\big(V(0),V(0)\big)\le4s_g(p)\int_0^{s_g(p)}g(V',V')\mathrm ds.
\end{equation}
Let us now estimate $I_p(V,V)$, using formula \eqref{eq:indexform}; note that $g\big(\dot\gamma_p,V)\dot\gamma_p,V\big)$
is equal to $-K(\dot\gamma_p,V)g(V,V)$, where $K(\dot\gamma_p,V)$ is the sectional curvature of the $2$-plane spanned by $\dot\gamma_p$ and $V$.
Then, using the very definition of the constants $\Lambda$ and $K$, we get:
\begin{multline}\label{eq:finalestimateIp}
I_p(V,V)\ge-\Lambda g\big(V(0),V(0)\big)+\int_0^{s_g(p)}\big[g(V',V')-Kg(V,V)\big]\,\mathrm ds\\
\stackrel{\text{by \eqref{eq:F3}}}\ge-4\Lambda^+ s_g(p)\int_0^{s_g(p)}g(V',V')\,\mathrm ds+\int_0^{s_g(p)}\big[g(V',V')-Kg(V,V)\big]\,\mathrm ds\\
\stackrel{\text{by \eqref{eq:F2}}}\ge\Big[-4\Lambda^+ s_g(p)+1-4K^+s_g(p)^2\Big]\cdot\int_0^{s_g(p)}g(V',V')\,\mathrm ds.
\end{multline}
Note that if $V\ne0$, then $\int_0^{s_g(p)}g(V',V')\,\mathrm ds>0$, because $V\big(s_g(p)\big)=0$, and therefore $V$ cannot be parallel along $\gamma$. Thus, from \eqref{eq:finalestimateIp} it follows that $I_p(V,V)>0$ if:
\begin{equation}\label{eq:F4}
1-4\Lambda^+ s_g(p)-4K^+s_g(p)^2>0.
\end{equation}
Note that \eqref{eq:suffcondnonfocality} coincides with \eqref{eq:F4} when $s_g(p)=L(M,g)$.
On the other hand, if \eqref{eq:suffcondnonfocality} is satisfied, it is easy to see that
\eqref{eq:F4} holds for every $p\in\partial M$.
\end{proof}

For instance, when $M$ is a ball, from part (a) of Proposition~\ref{thm:nofocalpoints} we deduce the following:
\begin{cor}\label{thm:perturbradsym}
If $M$ is diffeomorphic to a ball, then the set of regular and non-focal metrics on $M$ is a $C^2$-open set that contains all the rotationally symmetric metrics.
\end{cor}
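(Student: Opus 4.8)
The plan is to establish the two assertions separately: first the $C^2$-openness of the set $\mathrm{Reg}_*(M)$ of regular non-focal metrics, and then the fact that it contains every rotationally symmetric metric. The openness is immediate by intersecting two known open sets: the set $\mathrm{Reg}(M)$ of regular metrics is open in the $C^1$-topology (as already observed), hence also in the finer $C^2$-topology, while the set of non-focal metrics is $C^2$-open by part~(a) of Proposition~\ref{thm:nofocalpoints}; thus $\mathrm{Reg}_*(M)$ is $C^2$-open. It is worth emphasizing that one cannot locate the rotationally symmetric metrics via the sufficient condition in part~(b): already the flat metric on the round ball of radius $\rho_0$ violates \eqref{eq:suffcondnonfocality}, since there $K^+=0$, $\Lambda^+=\tfrac1{\rho_0}$ and $L=2\rho_0$, so the left-hand side equals $2-\tfrac14>0$. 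The point is that a rotationally symmetric metric always has an interior focal point; since non-focality only forbids focal points lying \emph{on} $\partial M$, the containment must be checked directly.

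For the containment I would first put the metric in normal form. As the $\mathrm{SO}(m+1)$-orbits are the concentric geodesic spheres together with the central fixed point, and the radial direction is orthogonal to the orbits, after a radial reparametrization the metric reads $g=\mathrm d\rho^2+\phi(\rho)^2\,\mathrm d\Omega^2$ on $\{0\le\rho\le\rho_0\}$, where $\mathrm d\Omega^2$ is the round metric on $\mathbb S^m$, $\phi>0$ on $\left]0,\rho_0\right]$ and $\phi(0)=0$ (with $\phi$ extending to a smooth odd function near $0$, by smoothness of $g$ at the center). The radial curves are unit-speed geodesics, so for $p\in\partial M$ the inward normal geodesic $\gamma_p$ is the diameter through $p$, which passes through the center and reaches the antipodal point $q_p=-p$ at time $s_g(p)=2\rho_0$, meeting $\partial M$ there orthogonally and hence transversally. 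Thus $g$ is regular, with $s_g\equiv 2\rho_0$.

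It then remains to analyze the $\partial M$-Jacobi fields along $\gamma_p$. By rotational symmetry the Jacobi equation splits into the $m$ equivalent spherical directions, and in each a field $J=f(t)E(t)$, with $E$ parallel and $E(0)\in T_p\partial M$, solves the scalar equation $f''+K_{\mathrm{rad}}f=0$ with radial sectional curvature $K_{\mathrm{rad}}=-\phi''/\phi$. A direct computation shows that $f(t)=\phi\big(\rho(t)\big)$ solves this equation and satisfies the scalar boundary condition $f'(0)+\kappa\,f(0)=0$ imposed by \eqref{eq:ICond}, where $\kappa=\phi'(\rho_0)/\phi(\rho_0)$ is the eigenvalue of $\mathcal S_p$. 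Hence the $\partial M$-Jacobi fields orthogonal to $\dot\gamma_p$ are precisely the multiples of $\phi\big(\rho(t)\big)$ (extended smoothly through the center by the odd extension of $\phi$), and these vanish only where $\rho(t)=0$, i.e.\ only at the center $t=\rho_0$. In particular none of them vanishes at the endpoint $t=2\rho_0$, so $q_p$ is not $\partial M$-focal: the unique focal point along $\gamma_p$ is the interior center, and therefore $g$ is non-focal.

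The real work, and the step I expect to be the main obstacle, is the verification underlying the third paragraph: the normal-form reduction together with the smoothness of the distinguished Jacobi field $\phi\big(\rho(t)\big)$ across the central focal point (which is exactly what the odd-extension discussion secures), and the bookkeeping showing that the $m$ spherical directions exhaust the perpendicular $\partial M$-Jacobi fields, so that no focal point other than the center can occur. Once these are in place, everything else is routine.
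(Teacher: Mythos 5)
Your proof is correct and follows essentially the same route as the paper's: regularity because the normal geodesics are radial and so meet $\partial M$ orthogonally (hence transversally) at the far endpoint, non-focality because the unique $\partial M$-focal point is the interior center of symmetry, and $C^2$-openness from Proposition~\ref{thm:nofocalpoints}(a) combined with the already-noted $C^1$-openness of $\mathrm{Reg}(M)$. The paper simply asserts by symmetry that the center is the only focal point, whereas you verify it explicitly via the warped-product normal form and the Jacobi field $\phi\big(\rho(t)\big)$ (your side remark that the sufficient condition \eqref{eq:suffcondnonfocality} necessarily fails here is also accurate, since part (b) would exclude even interior focal points), so your write-up is a correctly fleshed-out version of the paper's argument.
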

\begin{proof}
A rotationally symmetric metric in the ball is regular, because the geodesics starting orthogonally to the boundary are radial, and therefore they arrive transversally (orthogonally) to the boundary at the other endpoint. Moreover, for all these metrics the unique focal point to the boundary is the center of symmetry, which is far from the boundary.
Hence, sufficiently small perturbations of rotationally symmetric metrics are non-focal.
\end{proof}

\end{section}

\end{document}